  \edef\mtht{\the\textheight}
  \edef\mtwd{\the\textwidth}
  \definecolor{BackgroundColor}{RGB}{253, 246, 227}
\tikzset{
  commutative diagrams/.cd, 
  arrow style=tikz, 
  diagrams={>=stealth}
}
\addspace\texttt{\mkbibbrackets{\thefield{arxivclass}}}}}}
\addspace\texttt{\mkbibbrackets{\thefield{arxivclass}}}}}}
\newcommand{\printreferences}{\printbibliography[heading=bibintoc]}
\ifundef{\abstract}{}{\patchcmd{\abstract}%
    {\quotation}{\quotation\noindent\ignorespaces}{}{}}
\numberwithin{equation}{section}
\renewcommand{\eqref}[1]{\hyperref[#1]{\rm(\ref*{#1})}}
\def\makeautorefname#1#2{\AtBeginDocument{\expandafter\def\csname#1autorefname\endcsname{#2}}}
\newcommand{\mynewtheorem}[2]{
  \newaliascnt{#1}{equation}          
  \newtheorem{#1}[#1]{#2}
  \aliascntresetthe{#1}
  \makeautorefname{#1}{#2}
}
\newtheorem*{axiom*}{Axiom}
\newtheorem*{theorem*}{Theorem}
\newtheorem*{prop*}{Proposition}
\newtheorem*{conjecture*}{Conjecture}
\newtheorem{step}{Step}
\numberwithin{substep}{step}
\newtheorem{case}{Case}
\newtheorem{subcase}{Case}
\numberwithin{subcase}{case}
\theoremstyle{remark}
\newtheorem*{remark*}{Remark}
\newtheorem*{convention*}{Convention}
\newtheorem*{conventions*}{Conventions}
\theoremstyle{remark}
\theoremstyle{definition}
\newtheorem*{definition*}{Definition}
\newtheorem*{example*}{Example}
\newtheorem*{question*}{Question}
\let\C\undefined
\DeclareFontFamily{U}{mathx}{\hyphenchar\font45}
\DeclareFontShape{U}{mathx}{m}{n}{
      <5> <6> <7> <8> <9> <10>
      <10.95> <12> <14.4> <17.28> <20.74> <24.88>
      mathx10
      }{}
\DeclareSymbolFont{mathx}{U}{mathx}{m}{n}
\DeclareMathAccent{\widecheck}{0}{mathx}{"71}
\DeclareMathAccent{\wideparen}{0}{mathx}{"75}
\DeclareMathOperator{\End}{End}
\DeclareMathOperator{\Fr}{Fr}
\DeclareMathOperator{\GL}{GL}
\DeclareMathOperator{\HF}{\HF}
\DeclareMathOperator{\Hol}{Hol}
\DeclareMathOperator{\Hom}{Hom}
\DeclarePairedDelimiter{\set}{\lbrace}{\rbrace}
\def\({\left(}
\def\){\right)}
\def\<{\left\langle}
\def\>{\right\rangle}
\newcommand{\C}{{\mathbf{C}}}
\newcommand{\Gtwo}{G_2}
\newcommand{\N}{{\mathbf{N}}}
\newcommand{\R}{\mathbf{R}}
\newcommand{\SO}{\mathrm{SO}}
\newcommand{\SU}{\mathrm{SU}}
\newcommand{\Spin}{\mathrm{Spin}}
\newcommand{\Z}{\mathbf{Z}}
\newcommand{\andq}{\text{and}\quad}
\newcommand{\co}{\mskip0.5mu\colon\thinspace}
\newcommand{\del}{\partial}
\newcommand{\id}{\mathrm{id}}
\newcommand{\iso}{\cong}
\newcommand{\loc}{\mathrm{loc}}
\newcommand{\qandq}{\quad\text{and}\quad}
\newcommand{\qand}{\quad\text{and}}
\newcommand{\sing}{\mathrm{sing}}
\newcommand{\so}{\mathfrak{so}}
\newcommand{\vol}{\mathrm{vol}}
\renewcommand{\H}{\mathbf{H}}
\renewcommand{\Im}{\operatorname{Im}}
\renewcommand{\Re}{\operatorname{Re}}
\renewcommand{\epsilon}{\varepsilon}
\renewcommand{\setminus}{{\backslash}}
\renewcommand{\leq}{\leqslant}
\renewcommand{\geq}{\geqslant}
\renewcommand*\env@matrix[1][*\c@MaxMatrixCols c]{%
  \hskip -\arraycolsep
  \let\@ifnextchar\new@ifnextchar
  \array{#1}}
\renewcommand\xleftrightarrow[2][]{%
  \ext@arrow 9999{\longleftrightarrowfill@}{#1}{#2}}
\newcommand\longleftrightarrowfill@{%
  \arrowfill@\leftarrow\relbar\rightarrow}
\newcommand{\rd}{{\rm d}}
\newcommand{\rI}{{\rm I}}
\newcommand{\rII}{{\rm II}}
\newcommand{\rIII}{{\rm III}}
\newcommand{\ua}{{\underline a}}
\newcommand{\ub}{{\underline b}}
\newcommand{\bA}{{\mathbf{A}}}
\newcommand{\bE}{{\mathbf{E}}}
\newcommand{\bL}{{\mathbf{L}}}
\newcommand{\sA}{\mathscr{A}}
\newcommand{\sB}{\mathscr{B}}
\newcommand{\sG}{\mathscr{G}}
\newcommand{\sM}{\mathscr{M}}
\newcommand{\sN}{\mathscr{N}}
\newcommand{\sP}{\mathscr{P}}
\newcommand{\fg}{{\mathfrak g}}
\newcommand{\frg}{{\mathfrak g}}
\newcommand{\fs}{{\mathfrak s}}
\newcommand{\fu}{{\mathfrak u}}
\newcommand{\fA}{{\mathfrak A}}
\newcommand{\fF}{{\mathfrak F}}
\newcommand{\fI}{{\mathfrak I}}
\newcommand{\fM}{{\mathfrak M}}
\newcommand{\fX}{{\mathfrak X}}
\newcommand{\fY}{{\mathfrak Y}}
\newcommand{\slD}{\slashed D}
\newcommand{\slS}{\slashed S}
\author{
  Thomas Walpuski
}
\title{$\Gtwo$--instantons, associative submanifolds, and
  Fueter~sections}
\date{2016-04-29}
\begin{document}

\maketitle

\begin{abstract}
  We give sufficient conditions for a family of $\Gtwo$--instantons to be ``spontaneously'' be born out of a Fueter section of a bundle of moduli space of ASD instantons over an associative submanifold.
  This phenomenon is one of the key difficulties in defining the conjectural \emph{$\Gtwo$ Casson invariant} proposed by Donaldson--Thomas in \cite{Donaldson1998}.
\end{abstract}

\section{Introduction}
\label{Sec_Introduction}

Fix a compact $7$--manifold $Y$ together with a positive $3$--form $\phi$ satisfying a certain non-linear partial differential equation; see \eqref{Eq_TorsionFree} and the discussion preceding it.
The $3$--form $\phi$ canonically equips $Y$ with a metric (and orientation) such that the holonomy group $\Hol(g)$ is contained in the exceptional Lie group $\Gtwo$;
hence, $(Y,\phi)$ is commonly called a $\Gtwo$--manifold and $\phi$ is called a torsion-free $\Gtwo$--structure.

Given a $G$--bundle $E$ over $Y$, Donaldson and Thomas \cite{Donaldson1998} noted that there is a Chern--Simons type functional on $\sB(E)$, the space of gauge equivalence classes of connections, whose critical points $[A]$ satisfy
\begin{equation}
  \label{Eq_G2Instanton}
  *(F_A\wedge\phi) = - F_A.
\end{equation}
Solutions of \eqref{Eq_G2Instanton} are called $\Gtwo$--instantons.
These are the central objects in gauge theory on $\Gtwo$--manifolds.
The moduli space of $\Gtwo$--instantons
\begin{equation*}
  \sM(E,\phi):=\set{ [A]\in\sB(E) : *(F_A\wedge\phi)=-F_A }
\end{equation*}
can, in general, be a very complicated space.
However, after gauge fixing, \eqref{Eq_G2Instanton} has an elliptic deformation theory of index zero, i.e., $\sM(E,\phi)$ has virtual dimension zero.
Thus one can try to ``count'' $\sM(E,\phi)$, say, by a suitable perturbation scheme or via virtual cycle techniques and arrive at a number
\begin{equation*}
  n(E,\phi):=\#\sM(E,\phi).
\end{equation*}  

How does $n(E,\phi)$ depend on $\phi$?
Since the deformation theory of $\Gtwo$--instantons is very well-behaved, the key question one needs to understand is: how can $\Gtwo$--instantons degenerate as $\phi$ varies?
Consider a family of $\Gtwo$--instantons $(A_t)_{t \in (0,T]}$ over a family of $\Gtwo$--manifolds $(Y,\phi_t)_{t \in (0,T]}$ and assume that $\phi_t$ converges to a torsion-free $\Gtwo$--structure $\phi_0$ as $t \to 0$.
From classical results due to Uhlenbeck \cite{Uhlenbeck1982a}, Price \cite{Price1983} and Nakajima \cite{Nakajima1988} and more recent progress by Tian \cite{Tian2000} and Tao and Tian \cite{Tao2004} one can conclude the following:
\begin{itemize}
\item 
  There is a closed subset $P$ of $Y$ of finite $3$--dimensional Hausdorff measure and a $\Gtwo$--instanton $B$ over $(Y\setminus P,\phi_0)$ such that up to gauge transformations a subsequence of $(A_t)$ converges to $B$ in $C^\infty_\loc$ on $Y\setminus P$ as $t \to 0$.
\item
  $B$ can be extended to the complement of a closed set $\sing(B)$ of vanishing $3$--dimensional Hausdorff measure.
  However, $\sing(B)$ might very well be non-empty, that is: one might encounter non-removable singularities.
\item
  $P$ supports an integral current calibrated by $\phi_0$, or more informally: $P$ is a, possibly wildly singular, associative submanifold in $(Y,\phi_0)$.
  At almost every point $x \in P$, the degeneration of $(A_t)$ is modelled on (a bubbling tree of) ASD instantons bubbling off in the direction transverse to $P$.
\end{itemize}

This, of course, represents the worse case scenario.
One would expect that a generic deformation is less wild.
In this article we only consider the case when $B$ extends to all of $Y$ and $P$ is smooth.
Moreover, we form a bundle $\fM$ over $P$ whose fibres are moduli spaces of ASD instantons, as explained in \autoref{Sec_Fueter}, and assume that the ASD instantons bubbling off transverse to $P$ give rise to a section $\fI \in \Gamma(\fM)$.
Since the ASD instantons bubbling off do not have a canonical scale, $\fI$ is unique only up to the action of $C^\infty(P,\R_{>0})$.
Donaldson and Segal \cite{Donaldson2009} noticed that $\fI$ cannot be arbitrary but should satisfy a non-linear p.d.e.~called the Fueter equation, provided scalings are chosen appropriately; see \autoref{Sec_Fueter} for more details. 
This equation is elliptic of index zero;
however, since $\fM$ is a bundle of cones, one only expects solutions to appear only at isolated values in $1$--parameter families.
In particular, for a generic torsion-free $\Gtwo$--structure $\phi$, no Fueter section $\fI \in \Gamma(\fM)$ ought to exists.
Hence, the bubbling phenomenon for $\Gtwo$--instantons should only occur in codimension one.

The main result of this article is to prove that given the data $(B,P,\fI)$ and assuming certain ``acyclicity/unobstructedness conditions'' (which are expounded in \autoref{Def_G2InstantonAcyclic}, \autoref{Def_AssociativeUnobstructed} and \autoref{Def_FueterUnobstructed}), we can produce a family of $\Gtwo$--instantons yielding $(B,P,\fI)$ in the limit.

\begin{theorem}\label{Thm_A}
  Let $Y$ be a compact $7$--manifold equipped with a family of torsion-free $\Gtwo$--structures $(\phi_t)_{t\in(-T,T)}$.
  Suppose we are given:
  \begin{itemize}
  \item
    an acyclic $\Gtwo$--instanton $B$ on a $G$--bundle $E_0$ over $(Y,\phi_t)$,
  \item
    an unobstructed associative submanifold $P$ in $(Y,\phi_0)$ and
  \item
    a Fueter section $\fI$ of an instanton moduli bundle $\fM$ over $P$ associated $E_0|_{P}$ which is unobstructed with respect to $(\phi_t)$.
  \end{itemize}
  Then there is a constant $\Lambda>0$, a $G$--bundle $E$ together with a family of connections $(A_\lambda)_{\lambda\in(0,\Lambda]}$ and a continuous function $t\co[0,\Lambda]\to(-T,T)$ with $t(0) = 0$ such that:
  \begin{itemize}
  \item
    $A_\lambda$ is a $\Gtwo$--instanton on $E$ over $(Y,\phi_{t(\lambda)})$ for all $\lambda\in(0,\Lambda]$.
  \item 
     $A_\lambda$ converges to $B$ on the complement of $P$ and at each point $x\in P$ an ASD instanton in the equivalence class given by $\fI(x)$ bubbles off transversely as $\lambda \to 0$.
  \end{itemize}
\end{theorem}

As was already pointed out by Donaldson and Segal \cite{Donaldson2009}, an immediate consequence is that $n(E,\phi)$ has no reason to be invariant under (large) deformations of $\phi$.
They suggest that one should try to construct a counter term, say $m(E,\phi)$, as a weighted count of associative submanifolds and $\Gtwo$--instantons on bundles of ``smaller'' topological type than $E$, so that the sum $n(E,\phi)+m(E,\phi)$ is invariant under deformations.
The crucial point is to find out what these weights should be.
A candidate for the definition of these weights in the ``low energy'' $\SU(2)$--theory, which was hinted at by Donaldson--Segal \cite{Donaldson2009}, is explained in more detail in the author's PhD thesis \cite[Chapter 6]{Walpuski2013}.
A more systematic approach, based on generalised Seiberg--Witten equations and the ADHM construction, is currently being developed by Haydys and the author; see \cite{Haydys2014} for a first step.

\begin{remark}
  It would be interesting to see a concrete example of the input required by \autoref{Thm_A}.
  Unfortunately, no such example is known currently.
  The main difficulty with constructing such examples is to ensure that $\fI$ is unobstructed with respect to $(\phi_t)$. 
  It should be pointed out, however, that a construction closely related to \autoref{Thm_A} has been used by the author to construct $\Spin(7)$--instantons from a Fueter section of a bundle of moduli space of ASD instantons over a Cayley submanifold \cite{Walpuski2014}.
\end{remark}

\begin{remark}
  The proof of \autoref{Thm_A} is based on a gluing construction and the analysis involved is an extension of that required for the construction of $\Gtwo$--instantons on generalised Kummer constructions in \cite{Walpuski2011}.
  As such there are some similarities with Lewis' construction of $\Spin(7)$--instantons \cite{Lewis1998}, unpublished work by Brendle on $\Spin(7)$--instantons \cite{Brendle2003a} and Pacard--Ritoré's work on the Allen--Cahn equation \cite{Pacard2003}.
\end{remark}

\paragraph{Acknowledgements.}
This article is the outcome of work undertaken by the author for his PhD thesis at Imperial College London, supported by European Research Council Grant 247331.
I am grateful to my supervisor Simon Donaldson for his encouragement and for sharing some of his ideas with me.


\section{Review of geometry on \texorpdfstring{$\Gtwo$}{G2}--manifolds}
\label{Sec_G2Geometry}

We begin with a terse review of the basic notions of $\Gtwo$--geometry.
This is mainly to fix notation and conventions, and also recall a few results which we will make use of later.
The reader who is interested in a more detailed exposition is referred to Joyce's book \cite{Joyce2000}, which is the standard reference for most of the material in this section.

\begin{definition}
  \label{Def_Positive}
  A $3$--form $\phi$ on a $7$--dimensional vector space is called \emph{positive} if for each non-zero vector $v \in V$ the $2$--form $i(v)\phi$ on $V/\<v\>$ is symplectic.
\end{definition}

\begin{example}
  The $3$--form $\phi_0 \in \Omega^3(\R^7)$ defined by
  \begin{equation}
    \label{Eq_phi0}
    \phi_0 := \rd x^{123}-\rd x^{145}-\rd x^{167}-\rd x^{246}+\rd x^{257}-\rd x^{347}-\rd x^{356}
  \end{equation}
  is positive.
\end{example}

This example is representative in the sense that for any positive $3$--form $\phi$ on $V$ there exists a basis of $V$ with respect to which $\phi$ is given by $\phi_0$;
see, e.g., \cite[Theorem~3.2]{Salamon2010}.
Hence, the space of positive $3$--forms on $V$ is a $\GL(V)$--orbit.
The stabiliser of a fixed positive $3$--form is isomorphic to the exceptional Lie group $\Gtwo$.
The choice of a positive $3$--form $\phi$ equips $V$ with a canonical metric $g$ and orientation on $V$ such that
\begin{equation}
  \label{Eq_phiMetric}
  i(v_1)\phi\wedge i(v_2)\phi\wedge\phi=6g(v_1,v_2) \vol.
\end{equation}
In particular, if $\sP(V)$ denotes the space of positive $3$--forms on $V$, then there is a \emph{non-linear} map $\Theta\co \sP(V) \to \Lambda^4 V^*$ defined by
\begin{equation*}
  \Theta(\phi) := *_\phi \phi.
\end{equation*}

\begin{definition}
  A \emph{$\Gtwo$--structure} on a $7$--manifold $Y$ is a positive $3$--form $\phi \in \Gamma(\sP(TY)) \subset \Omega^3(Y)$.
  It is called \emph{torsion-free} if
  \begin{equation}
    \label{Eq_TorsionFree}
    \rd \phi = 0 \qandq \rd \Theta(\phi) = 0.
  \end{equation}
  A $7$--manifold $Y$ equipped with a torsion-free $\Gtwo$--structure $\phi$ is called a \emph{$\Gtwo$--manifold}.
\end{definition}

\begin{remark}
  From the above discussion is clear that a $\Gtwo$--structure is equivalent to a reduction of the structure group of the tangent bundle from $\GL(7)$ to $\Gtwo$.
  A theorem of Fernández and Gray \cite[Theorem 5.2]{Fernandez1982} asserts that \eqref{Eq_TorsionFree} is equivalent to $\nabla_g \phi = 0$;
  hence, for a torsion-free $\Gtwo$--structure, the holonomy group $\Hol(g)$ is contained in $\Gtwo$. 
\end{remark}

Examples of $\Gtwo$--manifold with $\Hol(g)$ strictly contained in $\Gtwo$ are easy to come by.
For our purposes the following very trivial example will play an important rôle.
\begin{example}
  \label{Ex_R3R4}
  Choose coordinates $\(x^1,x^2,x^3,y^1,\ldots,y^4\)$ on $\R^7=\R^3\oplus \R^4$ and set
  \begin{equation*}
    \omega_1 = \rd y^{12} + \rd y^{34}, \quad
    \omega_2 = \rd y^{13} - \rd y^{24} \qandq
    \omega_3 = \rd y^{14} + \rd y^{23}.
  \end{equation*}
  Then
  \begin{equation*}
    \phi = \rd x^{123}
    - \rd x^1 \wedge \omega_1
    - \rd x^2 \wedge \omega_2
    - \rd x^3 \wedge \omega_3
  \end{equation*}
  is a torsion-free $\Gtwo$--structure on $\R^7$.
\end{example}

There is by now a plethora of examples of $\Gtwo$--manifolds due to Bryant \cite{Bryant1987}, Bryant and Salamon \cite{Bryant1989}, Joyce \cite{Joyce1996},  Kovalev \cite{Kovalev2003}, Kovalev and Lee \cite{Kovalev2011}, and Corti, Haskins, Nordström and Pacini \cite{Corti2012a}.
The construction techniques (especially in the latter cases, which yield compact examples) are quite involved and we will not go into any detail.

\subsection{Gauge theory on \texorpdfstring{$\Gtwo$}{G2}--manifolds}
\label{Sec_G2GaugeTheory}

Let $(Y,\phi)$ be a \emph{compact} $\Gtwo$--manifold and let $E$ be a $G$--bundle over $Y$ where $G$ is a compact Lie group, say $G=\SO(3)$ or $G=\SU(2)$.
Denote by $\sA(E)$ the space of connections on $E$.

\begin{definition}
  A connection $A\in\sA(E)$ on $E$ is called a
  \emph{$\Gtwo$--instanton} on $(Y,\phi)$ if it satisfies \eqref{Eq_G2Instanton}, i.e.,
  \begin{equation*}
    *(F_A\wedge\phi) = - F_A.
  \end{equation*}
\end{definition}
Since $\phi$ is closed, it follows from the Bianchi identity that $\Gtwo$--instantons are Yang--Mills connections.
In fact, there is an energy identity which shows that $\Gtwo$--instantons are absolute minima of the Yang--Mills functional.

\begin{example}
  The pullback of an ASD instanton over $\R^4$ to $\R^7=\R^3\oplus\R^4$, as in \autoref{Ex_R3R4}, is a $\Gtwo$--instanton.
\end{example}

The first non-trivial examples of $\Gtwo$--instantons (with structure group $G=\SO(3)$) where recently constructed by the author in \cite{Walpuski2011}.
Those live on manifolds arising from Joyce's generalised Kummer construction.
A method to produce $\Gtwo$--instantons on $\Gtwo$--manifolds arising from the twisted connected sum construction was presented by Sá Earp and the author in \cite{SaEarp2013} and used to produce concrete examples by the author in \cite{Walpuski2015}.

From an analytical point of view equation~\eqref{Eq_G2Instanton} is slightly inconvenient to work with, because its linearisation supplemented with the Coulomb gauge is not elliptic.
However, we can make use of the following result whose proof can be found, e.g., in \cite[Proposition~3.7]{Walpuski2011}.

\begin{prop}
  \label{Prop_G2Instanton}
  Set $\psi : = \Theta(\phi)$. 
  Let $A\in\sA(E)$ be a connection on $E$.
  Then the following are equivalent.
  \begin{enumerate}
  \item
    $A$ is $\Gtwo$--instanton.
  \item
    $A$ satisfies $F_A\wedge\psi=0$.
  \item
    There is a $\xi\in\Omega^0(Y,\frg_E)$ such that
    \begin{align}\label{Eq_G2InstantonHiggs}
      F_A\wedge\psi+*\rd_A\xi=0.
    \end{align}
  \end{enumerate}
\end{prop}

From \autoref{Prop_G2Instanton} one can see that $\Gtwo$--instantons are in many ways similar to flat connections on $3$--manifolds.
In particular, if $A_0\in\sA(E)$ is a $\Gtwo$--instanton, then there is a
$\Gtwo$--Chern--Simons functional
\begin{equation*}
  CS^\psi(A_0+a):=\int_Y \<a\wedge\rd_{A_0} a + \frac13 a\wedge
  [a\wedge a]\>\wedge\psi
\end{equation*}
whose critical points are precisely the $\Gtwo$--instantons on $E$.
Very roughly speaking the conjectural $\Gtwo$ Casson invariant, suggested by Donaldson and Thomas \cite{Donaldson1998}, should be a signed count the critical points of $CS^\psi$ on a suitable completion of $\sA(E)/\sG(E)$.  Here $\sG(E)$ denotes the group of gauge transformations of $E$.

The infinitesimal deformation theory of $\Gtwo$--instantons around $A \in \sA(E)$ is governed by the self-dual elliptic complex
\begin{equation}
  \label{Eq_DeformationComplex}
  \Omega^0(Y,\fg_E) \xrightarrow{\rd_A}
  \Omega^1(Y,\fg_E) \xrightarrow{\psi\wedge\rd_A}
  \Omega^6(Y,\fg_E) \xrightarrow{\rd_A}
  \Omega^7(Y,\fg_E).
\end{equation}

\begin{definition}
  \label{Def_G2InstantonAcyclic}
  A $\Gtwo$--instanton $A \in \sA(E)$ is called \emph{irreducible}, \emph{rigid} or \emph{unobstructed} if \eqref{Eq_DeformationComplex} has vanishing cohomology in degree zero, one or two respectively.
  It is called \emph{acyclic} if the cohomology of \eqref{Eq_DeformationComplex} vanishes completely.
\end{definition}

\begin{remark}
  Since \eqref{Eq_DeformationComplex} is self-dual, rigid and unobstructed are the same thing;
  in particular, $A$ is acyclic if and only if it is irreducible and rigid/unobstructed.
\end{remark}

For any $A\in\sA(E)$ we define $L_A=L_{A,\phi}\co\Omega^0(Y,\frg_E)\oplus\Omega^1(Y,\frg_E) \to
\Omega^0(Y,\frg_E)\oplus\Omega^1(Y,\frg_E)$ by
\begin{align}
  \label{Eq_LAphi}
  L_{A,\phi}:=\begin{pmatrix}
    0 & \rd_A^* \\
    \rd_A & *\left(\psi\wedge\rd_A\right)
  \end{pmatrix}
\end{align}
where $\psi:=\Theta(\phi)$.
This is a self-adjoint elliptic operator.
It appears as the linearisation of equation~\eqref{Eq_G2InstantonHiggs} supplemented with the Coulomb gauge and therefore controls the infinitesimal deformation theory of $\Gtwo$--instantons.
Alternatively, $L_A$ is obtained by folding the complex \eqref{Eq_DeformationComplex}.

As an immediate consequence of the implicit function theorem we have the following result.

\begin{prop}
  \label{Prop_G2InstantonDeformation}
  Let $Y$ be a compact $7$--manifold and let $(\phi_t)_{t\in(-T,T)}$ be a family of torsion-free $\Gtwo$--structures on $Y$.
  Suppose that $A\in\sA(E)$ is an unobstructed $\Gtwo$--instanton on a
  $G$--bundle $E$ over $(Y,\phi_0)$.
  Then there is a constant $T'\in(0,T]$ and a unique family of $\Gtwo$--instantons $(A_t)_{t\in(-T',T')}$ on $E$ over $(Y,\phi_t)$ with $A_0=A$.
\end{prop}

\subsection{Associative submanifolds in \texorpdfstring{$\Gtwo$}{G2}--manifolds}
\label{Sec_Associatives}

Let $(Y,\phi)$ be a compact $\Gtwo$--manifold.
The $3$--form $\phi$ is a \emph{calibration} in the sense of Harvey--Lawson \cite{Harvey1982}, meaning that $\phi$ is closed and that for each oriented $3$--dimensional subspace $P$ of $T_xY$ the following inequality holds
\begin{equation*}
  \vol_{P} \leq \phi|_P.
\end{equation*}

\begin{definition}
  An oriented submanifold $P$ of $Y$ is called an \emph{associative submanifold} in $(Y,\phi)$ if it is calibrated by $\phi$, that is, for each $x\in P$ we have
  \begin{equation*}
    \vol_{T_xP}=\phi|_{T_xP}.
  \end{equation*}
\end{definition}

\begin{example}
  $\R^3\times\{0\}\subset \R^3\oplus \R^4$, as in \autoref{Ex_R3R4}, is an associative submanifold.
\end{example}

Associative submanifolds also arise as $3$--dimensional fixed point sets of orientation reversing involutions of $Y$ mapping $\phi$ to $-\phi$.
For concrete examples we refer the reader to Joyce \cite[Part II, Section 4.2]{Joyce1996}.
The recent work of Corti, Haskins, Nordström and Pacini \cite{Corti2012a} gives a number of concrete examples of associative submanifolds in twisted connected sums.

The importance of associative submanifolds in the study of gauge theory on $\Gtwo$--manifolds is due to the following fact:
Consider $(\R^7,\phi_0)$ and \emph{any} orthogonal decomposition $\R^7=\R^3\oplus \R^4$.
Let $I$ be a connection on a bundle over $\R^4$.
Then the pullback of $I$ to $\R^7$ is a $\Gtwo$--instanton if and only if  there is an orientation on $\R^3$ with respect to which it is calibrated by $\phi_0$ and $I$ is an ASD instanton on $\R^4$.
This is the underlying reason why the bubbling locus of a sequence of $\Gtwo$--instantons is associative and why the connections bubbling off transversely are ASD instantons.

In the following we will discuss some results due to McLean \cite{McLean1998} concerning the deformation theory of associative submanifolds.
If $P$ is an associative submanifold, then there is a natural identification
\begin{equation}
  \label{Eq_Clifford}
  TP \iso \Lambda^+ N^* P\co v \mapsto -i(v)\phi
\end{equation}
given by (the negative of) inserting tangent vectors to $P$ into $\phi$ (and restricting to $NP$).
Thinking of $\Lambda^+ N^* P$ as a sub-bundle of $\so(NP)$ this yields a Clifford multiplication $\gamma\co TP\to\End(NP)$.
Denote by $\bar\nabla$ the connection on $NP$ induced by the Levi--Civita connection on $Y$.
\begin{definition}
  The \emph{Fueter operator} $F_P=F_{P,\phi}\co\Gamma(NP)\to\Gamma(NP)$  associated with $P$ is defined by
  \begin{equation}
    \label{Eq_FPphi}
    F_{P,\phi} (n):=\sum_{i=1}^3 \gamma(e_i) \bar\nabla_i n
  \end{equation}
  with $(e_i)$ a local orthonormal frame on $P$.
\end{definition}

\begin{remark}
  \label{Rmk_FueterDirac}
  The Fueter operator $F_P$ can be identified with a twisted Dirac operator as follows.
  Pick a spin structure $\fs$ on $P$.
  Because of the identification~\eqref{Eq_Clifford} there is a unique $\SU(2)$--bundle $\fu$ over $P$ such that $\fs\times\fu$ is a spin structure on $NP$.
  The bundle $\fu$ also comes with a connection, such that the resulting connection on $\fs\times\fu$ is a spin connection.
  If $\slS$ and $U$ denote the quaternionic line bundles corresponding to $\fs$ and $\fu$, then $\slS\otimes_\C U$ has a natural real structure and its real part can be identified with $NP$.
  With respect to this identification $F_P$ becomes the twisted Dirac operator $\slD\co\Gamma(\Re(\slS\otimes_\C U))\to\Gamma(\Re(\slS\otimes_\C U))$.
\end{remark}

The importance of $F_P$ is that it controls the infinitesimal deformation theory of the associative submanifold $P$.
In particular, the moduli space of associative submanifolds near $P$ is modelled on the zero set of a smooth map from a neighbourhood of zero in the kernel of $F_P$ to its cokernel.

\begin{definition}
  \label{Def_AssociativeUnobstructed}
  An associative submanifold $P$ is called \emph{rigid} (\emph{unobstructed}) if $F_P$ is  injective (surjective).
\end{definition}

\begin{remark}
  Since $F_P$ is self-adjoint, unobstructed and rigid are the same thing.
  So unobstructed associative submanifolds are also rigid.
\end{remark}

Using McLean's setup for the deformation theory of associative submanifolds developed in \cite{McLean1998} the following is a simple consequence of the implicit function theorem.

\begin{prop}
  \label{Prop_AssociativeDeformation}
  Let $Y$ be a compact $7$--manifold and let $(\phi_t)_{t\in(-T,T)}$ be a family of torsion-free $\Gtwo$--structures on $Y$.
  Suppose that $P$ is an unobstructed associative submanifold in $(Y,\phi_0)$.
  Then there is a constant $T'\in(0,T]$ and a unique family of associative submanifolds $(P_t)_{t\in(-T',T')}$ in $(Y,\phi_t)$ with $P_0=P$.
\end{prop}


\section{Moduli spaces of ASD instantons over \texorpdfstring{$\R^4$}{R4}} 
\label{Sec_ASDModuli}

In the next section we will explain the construction of the bundle $\fM$ of moduli spaces of ASD instantons, the Fueter equation and provide more detail for the discussion preceding \autoref{Thm_A}.
As a preparation we quickly recall some basic facts about moduli spaces of ASD instantons over $\R^4$.

Fix a $G$--bundle $E$ over $S^4=\R^4\cup\{\infty\}$.
Denote by $M$ the moduli space of ASD instantons on $E$ framed over the point at infinity.
These moduli spaces are smooth manifolds, because ASD instantons over $S^4$ are always unobstructed as a consequence of the Weitzenböck formula; see, e.g., \cite[Proposition~2.2]{Taubes1982}.
By Uhlenbeck's removable singularities theorem \cite[Theorem~4.1]{Uhlenbeck1982} we can think of $M$ as a moduli space of framed finite energy ASD instantons on $\R^4$.
In a suitable functional analytic setup incorporating decay conditions at infinity, see, e.g., \cite{Taubes1983} or \cite{Nakajima1990}, the infinitesimal deformation theory of a framed ASD instanton $I$ over $\R^4$ is governed by the linear operator $\delta_I \co \Omega^1(\R^4,\fg_E) \to \Omega^0(\R^4,\fg_E) \oplus \Omega^+(\R^4,\fg_E)$ defined by
\begin{equation}
  \label{Eq_deltaI}
  \delta_I a:=(\rd_I^*a,\rd_I^+a).
\end{equation}
From the work of Taubes \cite{Taubes1983} it is known that $\delta_I$ is always surjective and that its kernel lies in $L^2$.
More precisely, we have the following result whose proof can be found, e.g., in \cite[Proposition~5.10]{Walpuski2011}.

\begin{prop}
  \label{Prop_Decay}
  Let $E$ be a $G$--bundle over $\R^4$ and let $I\in\sA(E)$ be a finite energy ASD instanton on $E$.
  Then the following holds.
  \begin{enumerate}
  \item
    If $a\in\ker\delta_I$ decays to zero at infinity, that is to say $\lim_{r\to\infty} \sup_{\del B_r(0)} |a|=0$, then $|\nabla^k a|=O(r^{-3-k})$ for $k \geq 0$.
    Here $r\co\R^4\to[0,\infty)$  denotes the radius function $r(x):=|x|$.
  \item
    If $(\xi,\omega)\in\ker\delta_I^*$ decays to zero at infinity, then $(\xi,\omega)=0$.
  \end{enumerate}
\end{prop}

In particular, this implies (once more) that $M$ is a smooth manifold and that it can be equipped with an $L^2$ metric arising from the standard metric on $\R^4$.
Clearly, $\Lambda^+:=\Lambda^+(\R^4)^*\iso\so(4)$ acts $\SO(4)$--equivariantly on $\R^4$ and on $\R\oplus\Lambda^+$.
It is a straight-forward computation to verify that the corresponding actions of $\Lambda^+$ on $\Omega^1(\R^4,\fg_E)$ and on $\Omega^0(\R^4,\fg_E)\oplus\Omega^+(\R^4,\fg_E)$ commute with $\delta_I$.
Hence, we obtain an $\SO(4)$--equivariant action of $\Lambda^+$ on $TM$.

\begin{remark}
  If we fix an identification $\R^4=\H$ and correspondingly $\Lambda^+=\Im\H$, then the above defines a hyperkähler structure on $TM$.
  However, for our purpose it is more natural not to fix such an identification.
\end{remark}

$M$ has carries an action of $\R^4\rtimes \R^+$ where $\R^4$ acts by translation and $\R^+$ acts by dilation, i.e., by pullback via $s_\lambda\co \R^4 \to \R^4$ where
\begin{equation*}
  s_\lambda(x):=\lambda x
\end{equation*}
for $\lambda\in\R^+$.
Since the centre of mass of the measure $|F_I|^2\vol$ is equivariant with respect to the $\R^4$--action, we can write
\begin{equation*}
  M=\mathring M \times \R^4
\end{equation*}
where $\mathring M$ is the space of instantons centred at zero.
The action of $\Lambda^+$ preserves this product structure and $\Lambda^+$ acts on the factor $\R^4$ in the usual way.

\begin{example}\label{Ex_ChargeOne}
  If $E$ is the unique $\SU(2)$--bundle over $S^4$ with $c_2(E)=1$, then $E$ carries a single ASD instanton $I$, commonly called ``the one-instanton'', unique up to scaling, translation and changing the framing at infinity.
  We can naturally write the corresponding moduli space as $M=\mathring M\times \R^4=(\slS^+\setminus\{0\})/\Z_2 \times \R^4$.
  Here $S^+$ is the positive spin representation associated with $\R^4$.
\end{example}

\begin{example}
  In general, if $E$ is an $\SU(r)$--bundle over $S^4$, then $M$ can be understood rather explicitly in terms the ADHM construction \cite[Section~3.3]{Donaldson1990}.
\end{example}

\begin{prop}
  \label{Prop_UniversalConnection}
  There exists a $G$--bundle $\bE$ over $M \times S^4$ together with a framing $\bE|_{M\times\set\infty} \to G$ and a tautological connection $\bA \in \sA(\bE)$ on $\bE$ such that:
  \begin{itemize}
  \item
    $\bE|_{\set{[I]}\times S^4} \iso E$ and
  \item
    $\bA$ restricted to ${\set{[I]}\times \R^4}$ is equivalent to $[I]$ via $\sG_0(E)$.
  \end{itemize}
  If we decompose the curvature of the tautological connection $\bA$ over $M\times \R^4$ according to the bi-grading on $\Lambda^* T^*(M\times\R^4)$ induced by $T(M\times\R^4) = \pi_1^*TM \oplus \pi_2^*T\R^4$, then its components satisfy the following:
  \begin{itemize}
  \item
    $F_{\bA}^{2,0} = -2\Delta_I^{-1}\<[a,b]\>$.
  \item
    $F_{\bA}^{1,1} \in \Gamma(\Hom(\pi_1^*TM,\pi_2^*T\R^4\otimes \fg_\bE))$ at $([I],x)$ is the evaluation of $a \in T_{[I]}M = \ker \delta_I$ at $x$;
    in particular, it is $(\R\oplus\Lambda^+)$--linear.
  \item
    $F_{\bA}^{0,2} \in \Gamma(\pi_2^*\Lambda^-(\R^4)^* \otimes \fg_\bE)$.
  \end{itemize}
\end{prop}

\begin{proof}[Proof sketch]
  There is a tautological connection on the pullback of $E$ to $\sA(E) \times S^4$.
  It is flat in the $\sA(E)$--direction.
  It is $\sG_0$--equivariant, but not basic;
  hence, induces a connection on $M\times S^4$ after choosing a connection on $\sA(E) \to \sA(E)/\sG_0(E)$.
  We chose the connection given whose horizontal distribution is given by the Coulomb gauge with respect to the metric on $\R^4$;
  that is, the connection with connection $1$--form $\theta(a) = \Delta_I^{-1}\rd_I^* a$ for $a \in T_I\sA = \Omega^1(\R^n,\fg_E)$.
  The $(2,0)$--component of the curvature of $\bA$ arises from the curvature of this connection.
  The second two bullets are tautological.
\end{proof}


\section{Fueter sections of instanton moduli bundles}
\label{Sec_Fueter}

Let $(Y,\phi)$ be a $\Gtwo$--manifold and let $P$ be an associative submanifold in $Y$.
Fix a moduli space $M$ of framed finite energy ASD instantons on $\R^4$, as in \autoref{Sec_ASDModuli}, and let $E_\infty$ be a $G$--bundle over $P$ together with a connection $A_\infty$.
In the context of \autoref{Thm_A} we take $E_\infty:=E_0|_P$ and $A_\infty:=B|_P$.

\begin{definition}
  The \emph{instanton moduli bundle} $\fM$ over $P$ associated with $E_\infty$ and $M$ is defined by
  \begin{equation*}
    \fM:= ({\rm Fr}(NP)\times E_\infty)\times_{\SO(4)\times G} M.
  \end{equation*}
  Similarly, we define $\mathring\fM$ with $\mathring M$ instead of $M$.
\end{definition}

\begin{example}\label{Ex_MChargeOne}
  Let $M=(\slS^+\setminus\{0\})/\Z_2\times\R^4$ be the moduli space of framed ASD instantons from \autoref{Ex_ChargeOne}.
  If we pick $\fs$ and $\fu$ as in \autoref{Rmk_FueterDirac}, then
  \begin{equation*}
    \fM = (\fs\times\fu\times E_\infty)\times_{\Spin(4)\times\SU(2)} M
        = (\Re(\slS\otimes E_{\infty})\setminus\{0\})/\Z_2 \times NP.
  \end{equation*}
  Here we used the fact that the $\SO(4)$ action on $M$ lifts to an action of $\Spin(4)$.
\end{example}

Denote by $N_\infty P := \Fr(NP)\times_{\SO(4)} S^4$ the sphere-bundle obtained from $NP$ by adjoining a section at infinity.

\begin{theorem}[Donaldson--Segal \cite{Donaldson2009} and Haydys
  \cite{Haydys2011}]
  \label{Thm_DSHaydys}
  To each section $\fI\in\Gamma(\fM)$ we can assign a $G$--bundle $E=E(\fI)$ over $N_\infty P$ together with a connection $I=I(\fI)$ and a framing $\Phi\co E|_{\infty}\to E_\infty$ such that:
  \begin{itemize}
  \item
    For each $x\in P$ the restriction of $I$ to $N_xP$ represents $\fI(x)$.
  \item
    The framing $\Phi$ identifies the restriction of $I$ to the section at infinity with $A_\infty$.
  \end{itemize}
\end{theorem}

The idea of the proof in \cite{Donaldson2009} is to use \autoref{Prop_UniversalConnection} to construct a universal bundle and connection on $\fM \times_{P} N_\infty P$ and to pull those back via $\fI$.

The actions of $\R^+$ on $\R^4$ and $M$ lift to fibre-wise actions on $NP$ and $\fM$.
The construction in \autoref{Thm_DSHaydys} is equivariant with respect to this action.
In particular, $I(s_\lambda^*\fI)=s_\lambda^*I(\fI)$.
It will be convenient to use the shorthand notations
\begin{equation*}
  I_\lambda := I(s_{1/\lambda}^*\fI) \qandq
  \fI_\lambda := s_{1/\lambda}^*\fI.
\end{equation*}

If a section $\fI\in\Gamma(\fM)$ does arise from a sequence of $\Gtwo$--instantons bubbling along $P$, then it is reasonable to expect that in the limit as $\lambda \to 0$ the connection $I_\lambda$ is ``close to being a $\Gtwo$--instanton''.
To make sense of that notion we define the $4$--form $\psi_0$ on $NP$ to be the zeroth order Taylor expansion of $\psi:= \Theta(\phi)$ off $P$.
More explicitly, we can write $\psi_0$ as
\begin{equation}
  \label{Eq_psi0}
  \psi := \vol_{NP}
  - e^1 \wedge e^2 \wedge \omega_{e_3}
  - e^2 \wedge e^3 \wedge \omega_{e_1}
  - e^3 \wedge e^1 \wedge \omega_{e_2}.
\end{equation}
Here $(e_i)$ is a local positive orthonormal frame on $P$, $(e^i)$ is its dual frame, $\vol_{NP}$ is the fibre-wise volume form on $NP$ and $v \in TP \mapsto \omega_v \in \Lambda^+N^*P$ is given by the identification~\eqref{Eq_Clifford}.
With this notation set up the natural requirement is that
\begin{equation}
  \label{Eq_Limiting}
  \lim_{\lambda\to 0}
  \lambda^{-2}s_\lambda^*(F_{I_\lambda}\wedge\psi_0) = F_I \wedge
  (\psi_0-\vol_{NP}) = 0.
\end{equation}
If we introduce a bi-grading on $k$--forms on $NP$ according to the splitting $TNP=\pi_1^*TP\oplus\pi_2^*NP$ corresponding to the connection on $NP$ with $\pi_1\co TP\to P$ and $\pi_2\co NP\to P$ denoting the canonical projections, then it is easy to see that equation~\eqref{Eq_Limiting} splits into two parts.
The first one is simply the condition that the anti-self-dual part of $F_I^{0,2}$ must vanish, while the second part is given by
\begin{align*}
  F_I^{1,1} \wedge\psi_0 = 0.
\end{align*}
This condition can be understood as a partial differential equation on $\fI$ as follows.
Define the vertical tangent bundle $V\fM$ to $\fM$ by
\begin{align*}
  V\fM:= ({\rm Fr}(NP)\times E_\infty)\times_{\SO(4)\times G} TM.
\end{align*}
If $\fI$ is a section of $\fM$, then the action of $\Lambda^+$ on $M$ induces a Clifford multiplication $\gamma\co TP \to \End(\fI^*V\fM)$ in view of the identification~\eqref{Eq_Clifford}.
Moreover, the connections on $NP$ and $E_\infty$ induce a connection $\nabla$ on $\fM$ assigning to each section $\fI$ its covariant derivative $\nabla\fI\in\Omega^1(\fI^*V\fM)$.

\begin{definition}
  The \emph{Fueter operator} $\fF$ associated with $\fM$ is defined by
  \begin{equation*}
    \fI\in\Gamma(\fM) \mapsto \fF\fI:=\sum_{i=1}^3 \gamma(e_i)\nabla_i \fI \in\Gamma(\fI^*V\fM)
  \end{equation*}
  with $(e_i)$ a local orthonormal frame on $P$.
  A section $\fI\in\Gamma(\fM)$ is called a \emph{Fueter section} if it satisfies $\fF\fI=0$.
\end{definition}

\begin{example}
  \label{Eq_FueterChargeOne}
  If $M$ is as in \autoref{Ex_ChargeOne}, then the Fueter operator $\fF$ lifts to the twisted Dirac operator $\slD\co\Gamma(\Re(S\otimes_\C (E_{\infty} \oplus U))\to\Gamma(\Re(S\otimes_\C (E_{\infty} \oplus U))$, cf.~\autoref{Rmk_FueterDirac}.
\end{example}

The Fueter operator $\fF$ is compatible with the product structure on
\begin{equation*}
  \fM=\mathring\fM \times NP
\end{equation*}
 corresponding to $M=\mathring{M}\times\R^4$.
Its restriction to the second factor is given by the Fueter operator $F_P$ associated with $P$.

\begin{theorem}[Donaldson--Segal~\cite{Donaldson2009} and Haydys
  \cite{Haydys2011}]
  \label{Thm_DSHaydysEq}
  If $\fI\in\Gamma(\fM)$, then we can identify $\Gamma(\fI^*V\fM)$ with a subspace of $\Omega^1\(NP, \fg_{E(\fI)}\)$.
  With respect to this identification we have the identity
  \begin{align*}
    \fF\fI=*_0(F_{I(\fI)}^{1,1}\wedge\psi_0)
  \end{align*}
  where $*_0$ is the Hodge--$*$--operator on $NP$.
  In particular, $I(\fI)$ satisfies equation \eqref{Eq_Limiting} if and only if $\fI$ is a Fueter section.
\end{theorem}

\begin{definition}
  The \emph{linearised Fueter operator} $F_\fI=F_{\fI,\phi}\co\Gamma(\fI^*V\fM)\to\Gamma(\fI^*V\fM)$ for $\fI\in\Gamma(\fM)$ is defined by
  \begin{equation}
    \label{Eq_FIphi}
    F_{\fI,\phi}(\hat \fI) := \sum_i \gamma(e_i)\nabla_i \hat \fI 
  \end{equation}
  with $(e_i)$ a local orthonormal frame on $P$.
\end{definition}

\begin{example}
  \label{Ex_LinearisedFueterChargeOne}
  If $M$ is as in \autoref{Ex_ChargeOne}, then the linearised Fueter operator $F_\fI$ lifts to the twisted Dirac operator $\slD\co\Gamma(\Re(\slS\otimes_\C (E_{\infty} \oplus U))\to\Gamma(\Re(\slS\otimes_\C (E_{\infty} \oplus U))$.
  In particular, it only depends on the spin structure $\fs$ and not on $\fI$.
\end{example}

The operator $F_\fI$ is self-adjoint and elliptic;
however, it can never be invertible if $\fI$ is a Fueter section $\fI$.
This is because Fueter sections come in $1$--parameter families $(\fI_\lambda)_{\lambda\in\R+}$.
In particular, taking the derivative at $\lambda=1$ yields an element in the kernel of $F_\fI$.
If $\hat v \in \Gamma(V\fM)$ denotes the vector field generating the action of $\R^+$ on $\fM$, then we can succinctly write this element of the kernel as $\hat v \circ \fI$.

Let $(\phi_t)_{t\in(-T,T)}$ be a family of torsion-free $\Gtwo$--structures on $Y$, let $B_0$ be an unobstructed $\Gtwo$--instanton on a $G$--bundle $E$ over $(Y,\phi_0)$ and let $P_0$ be an unobstructed associative submanifold in $(Y,\phi_0)$.
Then by \autoref{Prop_G2InstantonDeformation} and \autoref{Prop_AssociativeDeformation} we obtain a family of $\Gtwo$--instantons $(B_t)_{t\in(-T',T')}$ over $(Y,\phi_t)$ and a family of associative submanifolds $(P_t)_{t\in(-T',T')}$ in $(Y,\phi_t)$ for some $T'\in(0,T]$.
Now, carry out the above construction with $P=P_t$, $E_\infty=E|_{P_t}$, $A_\infty=B_t|_{P_t}$ and a fixed moduli space $M$ of framed finite energy ASD instantons to obtain a family of instanton moduli bundles $(\fM_t)_{t\in(-T',T')}$ along with a family of Fueter operators $(\fF_t)_{t\in(-T',T')}$.
If $\fI_0$ is a Fueter section of $\fM_0$ with
\begin{equation*}
  \dim\ker F_{\fI_0}=1,
\end{equation*}
then, using the implicit function theorem, we obtain a family $(\fI_t)_{t\in(-T',T')}$ of sections of $\fM_t$ satisfying
\begin{align}\label{Eq_SpectralFlow}
  \fF_t\fI_t+\mu(t)\cdot\hat v\circ\fI_t=0
\end{align}
where $\mu\co (-T',T') \to \R$ is a smooth function vanishing at zero.

\begin{definition}\label{Def_FueterUnobstructed}
  In the above situation we say that $\fI_0$ is \emph{unobstructed} with respect to $(\phi_t)$ if
  \begin{align*}
    \left.\frac{\del\mu}{\del t}\right|_{t=0} \neq 0.
  \end{align*}
\end{definition}

\begin{remark}
  \label{Rmk_WeaklyUnobstructed}
  One can work with a slightly weaker notion of unobstructedness where one only requires that $\mu$ is strictly monotone near $t = 0$.
  A slight variation of \autoref{Thm_A} still holds in this case.
  We will pick up this thread again in \autoref{Sec_Conclusion}.
\end{remark}

\begin{example}
  If $M$ is as in \autoref{Ex_ChargeOne}, then equation~\eqref{Eq_SpectralFlow} can be viewed as the spectral flow of a family of twisted Dirac operators and $\fI_0$ is unobstructed if and only if this spectral flow has a regular crossing at $\fI_0$.
\end{example}


\section{Pregluing construction}
\label{Sec_Pregluing}

In this section we begin the proof of \autoref{Thm_A} in earnest.
Suppose that $Y$, $(\phi_t)_{t\in(-T,T)}$, $B$, $P$ and $\fI \in \Gamma(\fM)$ are as in the hypothesis of \autoref{Thm_A}.

\begin{convention}
  We fix constants $T'\in(0,T]$ and $\Lambda>0$ such that all of the statements of the kind ``if $t\in(-T',T')$ and $\lambda\in(0,\Lambda]$, then \ldots'' appearing in the following are valid.
  This is possible since there is only a finite number of these statements and each one of them is valid provided $T'$ and $\Lambda$ are sufficiently small.
  By $c>0$ we will denote a generic constant whose value depends neither on $t\in(-T',T')$ nor on $\lambda\in(0,\Lambda]$ but may change from one occurrence to the next.
\end{convention}

As discussed at the end of \autoref{Sec_Fueter}, $B_0 := B$ and $P_0 := P$ give rise to:
\begin{itemize}
\item 
  a family $(B_t)_{t\in(-T',T')}$ of $\Gtwo$--instantons on $E_0$ over $(Y,\phi_t)$,
\item 
  a family of associative submanifolds $(P_t)_{t\in(-T',T')}$ in $(Y,\phi_t)$ and, hence,
\item
  a family of instanton moduli bundles $(\fM_t)_{t\in(-T',T')}$ with $\fM_0=\fM$ and Fueter operators $(\fF_t)_{t\in(-T',T')}$ together with sections $(\fI_t)_{t\in(-T',T')}$ satisfying $\fI_0=\fI$ and
  \begin{equation}
    \label{Eq_SpectralFlow2}
    \fF_t \fI_t + \mu(t) \hat v \circ \fI_t = 0
  \end{equation}
  where $\mu\co (-T',T') \to \R$ is a smooth function vanishing at zero with
  \begin{equation*}
    \left.\frac{\del\mu}{\del t}\right|_{t=0} \neq 0.
  \end{equation*}
\end{itemize}
The proof of \autoref{Thm_A} proceeds via a gluing construction.
As a first step we explain how to construct approximate solutions.

\begin{prop}
  \label{Prop_Pregluing}
  For each $t\in(-T',T')$ and $\lambda\in(0,\Lambda]$ we can explicitly construct a $G$--bundle $E_{t,\lambda}$ together with a connection $A_{t,\lambda}=A_t\#_\lambda\fI_t$ from $E_0$, $A_t\in\sA(E_0)$ and $\fI_t$.
  The bundles $E_{t,\lambda}$ are pairwise isomorphic.
\end{prop}

Before we embark on the proof, let us set up some notation.
Fix a constant $\sigma>0$ such that for all $t\in(-T',T')$ the exponential map identifies a tubular neighbourhood of width $8\sigma$ of $P_t$ in $Y$ with a neighbourhood of the zero section in $NP_t$.
For $I\subset\R$ we set
\begin{equation*}
  U_{I,t} := \{ v\in NP_t : |v|\in I \} \qandq
  V_{I,t} := \{ x \in Y : r_t(x)\in I \}.
\end{equation*}
Here $r_t:=d(\cdot,P_t)\co Y\to [0,\infty)$ denotes the distance from $P_t$.
Fix a smooth-cut off function $\chi\co[0,\infty)\to[0,1]$ which vanishes on $[0,1]$ and is equal to one on $[2,\infty)$.
For $t\in(-T',T')$ and $\lambda\in(0,\Lambda]$ we define $\chi^-_{t,\lambda}\co Y\to[0,1]$ and $\chi^+_t\co Y\to[0,1]$ by
\begin{equation*}
  \chi^-_{t,\lambda}(x) := \chi(r_t(x)/2\lambda) \qandq
  \chi^+_t(x) := 1-\chi(r_t(x)/2\sigma),
\end{equation*}
respectively.

\begin{proof}[Proof of Proposition~\ref{Prop_Pregluing}]
  Via radial parallel transport we can identify $E(\fI_t)$ over $U_{(R,\infty),t}$ for some $R>0$ with the pullback of $E(\fI_t)|_{\infty}$ to said region and similarly we can identify $E_0$ over $V_{[0,\sigma),t}$ with the pullback of $E_0|_{P_t}$.
  Hence, via the framing $\Phi$ we can identify $s_{1/\lambda}^*E(\fI_t)$ with $E_0$ on the overlap $V_{(\lambda,\sigma),t}$ for $\lambda \in (0,\Lambda]$. 
  Patching both bundles via this identification yields $E_{t,\lambda}$.

  To construct a connection on $E_{t,\lambda}$ note that on the overlap $I_{t,\lambda}:=s_{1/\lambda}^*I(\fI_t)$ and $B_t$ can be written as
  \begin{equation*}
    I_{t,\lambda}=B_t|_{P_t}+i_{t,\lambda} \qandq
    B_t=B_t|_{P_t}+b_t.
  \end{equation*}
  Here and in the following, by a slight abuse of notation, we denote by $B_t|_{P_t}$ the pullback of $B_t|_{P_t}$ to the overlap.
  We define $A_{t,\lambda}$ by interpolating between $I_\lambda$ and $B_t$ on the overlap as follows
  \begin{equation}
    \label{Eq_PregluedConnection}
    A_{t,\lambda}:=B_t|_{P_t}+\chi^-_{t,\lambda}b_t+\chi^+_ti_{t,\lambda}.
    \qedhere
  \end{equation}
\end{proof}

Now, in view of Proposition~\ref{Prop_G2Instanton}, the task at hand is to solve the equation
\begin{equation}
  \label{Eq_Main}
  *_{\phi_t}\(F_{A_{t,\lambda}+a}\wedge\psi_t\)+\rd_{A_{t,\lambda}+a}\xi=0
\end{equation}
where $\psi_t:=\Theta(\phi_t)=*_{\phi_t}\phi_t$, $t=t(\lambda)$, $a=a(\lambda)$ and $\xi=\xi(\lambda)$.
If we could find an appropriate analytic setup in which $(a,\xi)=0$ becomes closer and closer to being a solution of equation~\eqref{Eq_Main} while at the same time the linearisations $L_{t,\lambda}:=L_{A_{t,\lambda},\phi_t}$, as defined in \eqref{Eq_LAphi}, possess right inverses that can be controlled \emph{uniformly} in $t$ and $\lambda$, then it would not be too difficult to solve equation~\eqref{Eq_Main} for all $t\in(-T',T')$ and $\lambda\in(0,\Lambda]$.
Since the properties of $L_{t,\lambda}$ are closely linked, among other things, to those of $F_{\fI_t}$ and since $F_{\fI_0}$ has a one-dimensional cokernel, however, we will only be able to solve equation~\eqref{Eq_Main} ``modulo the cokernel of $F_{\fI_0}$''.
More precisely, when $t\in(-T',T')$ and $\lambda\in(0,\Lambda]$ we will be able to solve the equation
\begin{equation}
  \label{Eq_MainExtended}
  L_{t,\lambda}\ua + \eta \cdot \iota_{t,\lambda}\hat v\circ \fI_t +
  Q_{t,\lambda}(\ua)
  + e_{t,\lambda}=0
\end{equation}
for $\ua=(\xi,a)\in\Omega^0(Y,\fg_{E_{t,\lambda}}) \oplus\Omega^1(Y,\fg_{E_{t,\lambda}})$ and $\eta\in\R$ with $Q_{t,\lambda}$ and $e_{t,\lambda}$ defined by
\begin{equation}
  \label{Eq_QuadraticTerm}
  Q_{t,\lambda}(\ua):=
 \frac12*\(\left[a\wedge
 a\right]\wedge\psi_t\)
 +[\xi,a].
\end{equation}
and
\begin{equation*}
  e_{t,\lambda}
  := *(F_{A_{t,\lambda}}\wedge\psi_t)
  + \mu(t)\cdot\iota_{t,\lambda}\hat v\circ \fI_t,
\end{equation*}
respectively.
Here the map $\iota_{t,\lambda}\co\Gamma(\fI_t^*V\fM_t)\to\Omega^1(Y,\fg_{E_{t,\lambda}})$ is defined by
\begin{equation*}
  \iota_{t,\lambda}\hat\fI:=\chi^+_ts_{1/\lambda}^*\hat\fI
\end{equation*}
where we first identify $\hat\fI\in\Gamma(\fI_t^*V\fM_t)$ with an element of $\Omega^1\(NP,E(\fI_t)\)$, then view the restriction of its
pullback via $s_\lambda^{-1}$ to $U_{[0,\sigma),t}$ as lying in $\Omega^1(V_{[0,\sigma),t},\fg_{E_{t,\lambda}})$ and finally extended it to all of $Y$ by multiplication with $\chi^+_t$.
After solving \eqref{Eq_MainExtended} we are left with the residual scalar equation
\begin{equation*}
  \mu(t) + \eta(t,\lambda) = 0.
\end{equation*}
It will turn out that $\eta$ and $\del_t\eta$ go to zero as $\lambda \to 0$.
Since $\del_t \mu(0) \neq 0$, finding $t=t(\lambda)$ such that equation~\eqref{Eq_Main} is satisfied is then a simple consequence of an implicit function theorem.

Let us now discuss some aspects of the analysis.
First of all we will introduce appropriate weighted Hölder spaces in \autoref{Sec_Holder}.
One should think of these weighted spaces as a convenient framework to deal with different local scales simultaneously.
In our case they are constructed to counteract the fact that the curvature of the connection $A_{t,\lambda}$ around $P_t$ becomes larger and larger as $\lambda \to 0$.
We will see in \autoref{Sec_PregluingEstimate} that the amount by which our approximate solutions $A_{t,\lambda}$ fail to be solutions of equation~\eqref{Eq_Main} ``modulo the cokernel of $F_{\fI_0}$'' measured in our weighted Hölder norms goes to zero at a certain rate as $\lambda \to 0$.
The key difficulty then lies in analysing the linearisation $L_{t,\lambda}$.
As is the case in most adiabatic limit constructions, the linearisation $L_{t,\lambda}$ is rather badly behaved on an infinite dimensional space:
For every $\hat\fI\in\Gamma(\fI_t^*V\fM_t)$ the appropriate norm of $\iota_{t,\lambda} \hat\fI$ is essentially independent of $\lambda$, while the appropriate norm of $L_{t,\lambda} \iota_{t,\lambda} \hat\fI$ tends to zero as $\lambda \to 0$.
To overcome this issue it is convenient to split the problem at hand into a part coming from $\Gamma(\fI_t^*V\fM_t)$ and the part orthogonal to it.
We define $\pi_{t,\lambda}\co\Omega^1(Y,\fg_{E_{t,\lambda}}) \to
\Gamma(\fI_t^*V\fM_t)$ by
\begin{equation*}
  (\pi_{t,\lambda} a)(x)
  :=\sum_\kappa \int_{N_xP} \<a,\iota_{t,\lambda}\kappa\> \kappa
\end{equation*}
for $x\in P_t$. 
Here $\kappa$ runs through an orthonormal basis of $(V\fM_t)_{\fI(x)}$ with respect to the inner product $\<\iota_{t,\lambda}\cdot,\iota_{t,\lambda}\cdot\>$.
Clearly, $\pi_{t,\lambda} \iota_{t,\lambda}=\id$;
hence, $\bar\pi_{t,\lambda}:=\iota_{t,\lambda}\pi_{t,\lambda}$ is a projection.
We denote the complementary projection by $\rho_{t,\lambda} := \id-\bar\pi_{t,\lambda}$.
If we define
\begin{equation*}
  \fA_{t,\lambda}:=\Omega^0(Y,\fg_{E_{t,\lambda}}) \oplus \ker \pi_{t,\lambda},
\end{equation*}
then we can write
\begin{equation*}
  \Omega^0(Y,\fg_{E_{t,\lambda}})\oplus\Omega^1(Y,\fg_{E_{t,\lambda}}) = \fA_{t,\lambda} \oplus \Gamma(\fI_{t}^*V\fM_t)
\end{equation*}
and decompose $L_{t,\lambda}$ accordingly into a $2$--by--$2$ matrix of operators.
We will see in \autoref{Sec_LinearAnalysis} that the diagonal entries can be controlled in terms of certain models on $\R^7$, $L_{A_0}$ and the linearised Fueter operator $F_{\fI_0}$, while the off-diagonal terms are negligibly small.
In \autoref{Sec_QuadraticEstimate} we discuss how to control the non-linearity $Q_{t,\lambda}$ in equation~\eqref{Eq_MainExtended}.
The completion of the proof of \autoref{Thm_A} in \autoref{Sec_Conclusion} will then be rather straight-forward.


\section{Weighted Hölder norms}
\label{Sec_Holder}

For $t\in(-T',T')$ and $\lambda\in(0,\Lambda]$ we define a family of weight functions $w_{\ell,\delta;t,\lambda}$ on $Y$ depending on two additional parameters $\ell,\delta\in\R$ as follows
\begin{align*}
  w_{\ell,\delta;t,\lambda}(x) :=
  \begin{cases}
    \lambda^{\delta}(\lambda+r_t(x))^{-\ell-\delta}
    &\text{if}~r_t(x)\leq \sqrt\lambda \\
    r_t(x)^{-\ell+\delta} &\text{if}~r_t(x) > \sqrt\lambda
  \end{cases}
\end{align*}
and set $w_{\ell,\delta;t,\lambda}(x,y) :=
\min\{w_{\ell,\delta;t,\lambda}(x), w_{\ell,\delta;t,\lambda}(x)\}.$
For a Hölder exponent $\alpha\in(0,1)$ and $\ell,\delta\in\R$ we
define (semi-)norms
\begin{align*}
  \|f\|_{L^\infty_{\ell,\delta;t,\lambda}(U)}
    &:=\|w_{\ell,\delta;t,\lambda}f\|_{L^\infty(U)}, \\
  [f]_{C^{0,\alpha}_{\ell,\delta;t,\lambda}(U)}
    &:=\sup_{\stackrel{x\neq y\in U:}{d(x,y)\leq \lambda + \min\{r_t(x),r_t(y)\}}}
      w_{\ell-\alpha,\delta;t,\lambda}(x,y) \frac{|f(x)-f(y)|}{d(x,y)^\alpha} \qand \\
  \|f\|_{C^{k,\alpha}_{\ell,\delta;t,\lambda}(U)} 
    &:=\sum_{j=0}^k
    \|\nabla^{k} f\|_{L^\infty_{\ell-j,\delta;t,\lambda}(U)}
    + [\nabla^{k}f]_{C^{0,\alpha}_{\ell-j,\delta;t,\lambda}}.
\end{align*}
Here $f$ is a section of a vector bundle over $U\subset Y$ equipped
with an inner product and a compatible connection.  We use parallel
transport to compare the values of $f$ at different points.  If $U$ is
not specified, then we take $U=Y$.  We will primarily use this norm
for $\fg_{E_{t,\lambda}}$--valued tensor fields.

\begin{remark}
  The reader may find the following heuristic useful.  Let $f$ be a
  $k$--form on $Y$.  Fix a small ball centred at a point $x\in P_t$,
  identify it with a small ball in $T_xY=T_xP_t\oplus N_xP_t$ and
  rescale this ball by a factor $1/\lambda$.  Upon pulling everything
  back to this rescaled ball the weight function
  $w_{-k,\delta,t,\lambda}$ becomes essentially
  $\lambda^k(1+|y|)^{k-\delta}$, where $y$ denotes the
  $N_xP_t$--coordinate.  Thus as $\lambda$ goes to zero a uniform
  bound $\|f_\lambda\|_{L^\infty_{-k,\delta,t,\lambda}}$ on a family
  $(f_\lambda)$ of $k$--forms ensures that the pullbacks of
  $f_\lambda$ decay like $|y|^{-k+\delta}$ in the direction of
  $N_xP_t$.  At the same time it forces $f_\lambda$ not to blowup at a
  rate faster than $r_t^{-k-\delta}$ along $P_t$.  The ``discrepancy''
  in the exponents can be seen to be rather natural by considering the
  action of the inversion $y \mapsto \lambda y/|y|^2$.
\end{remark}

\begin{prop}
  \label{Prop_Multiplication}
  If $(f,g)\mapsto f\cdot g$ is a bilinear form satisfying
  $|f\cdot g|\leq |f||g|$, then
  \begin{equation*}
    \|f\cdot
    g\|_{C^{k,\alpha}_{\ell_1+\ell_2,\delta_1+\delta_2;t,\lambda}}
    \leq \|f\|_{C^{k,\alpha}_{\ell_1,\delta_1;t,\lambda}}
    \|g\|_{C^{k,\alpha}_{\ell_2,\delta_2;t,\lambda}}.
  \end{equation*}
\end{prop}

\begin{proof}
  This follows immediately from the above definition.
\end{proof}

\begin{cor}
  \label{Cor_DeltaSensitivity}
  If $\delta<0$, then there is a constant $c>0$ which is independent
  of $t\in(-T',T')$ and $\lambda\in(0,\Lambda]$ such that
  \begin{equation*}
    \|f\|_{C^{k,\alpha}_{\ell,\delta;t,\lambda}}
    \leq c\lambda^{\delta/2} \|f\|_{C^{k,\alpha}_{\ell,0;t,\lambda}}
    \qandq
    \|f\|_{C^{k,\alpha}_{\ell,0;t,\lambda}}
    \leq c \|f\|_{C^{k,\alpha}_{\ell,\delta;t,\lambda}}
  \end{equation*}
\end{cor}

\begin{proof}
  Use $\|1\|_{C^{k,\alpha}_{0,\delta;t,\lambda}} \leq c\lambda^{\delta/2}$ and
  $\|1\|_{C^{k,\alpha}_{0,-\delta;t,\lambda}} \leq c$ for $\delta<0$.
\end{proof}

\begin{prop}
  \label{Prop_IotaPi}
  For $\ell\leq-1$ and $\delta\in\R$ such that $\ell-\alpha+\delta > -3$ and $\ell + \delta < -1$ there is a constant $c>0$ such that for all
  $t\in(-T',T')$ and $\lambda\in(0,\Lambda]$
  we have
  \begin{align*}
    \|\iota_{t,\lambda}\hat\fI\|_{C^{0,\alpha}_{\ell,\delta;t,\lambda}}
      &\leq c\lambda^{-1-\ell} \|\hat\fI\|_{C^{0,\alpha}} \qandq \\
    \|\pi_{t,\lambda} a\|_{C^{0,\alpha}}
      &\leq c
      \lambda^{1+\ell-\alpha}\|a\|_{C^{0,\alpha}_{\ell,\delta;t,\lambda}(V_{[0,\sigma);t})}.
  \end{align*}
  In particular,
  $\bar\pi_{t,\lambda}=\iota_{t,\lambda}\pi_{t,\lambda}$ and
  $\rho_{t,\lambda}$ are bounded by $c\lambda^{-\alpha}$ with respect
  to the $C^{0,\alpha}_{\ell,\delta;t,\lambda}$--norms.
\end{prop}

\begin{proof}
  From \autoref{Prop_Decay} it follows at once that
  \begin{equation*}
    \|s_{1/\lambda}^*\hat\fI\|_{C^{0,\alpha}_{-3,0;t,\lambda}\(V_{[0,\sigma),t}\)}
    \leq c \lambda^2 \|\hat\fI\|_{C^{0,\alpha}}.
  \end{equation*}
  The first inequality thus is a consequence of
  \autoref{Prop_Multiplication} since
  $\|\chi^+_t\|_{C^{0,\alpha}_{3+\ell,\delta;t,\lambda}} \leq c
  \lambda^{-3-\ell}$ for $\ell+\delta>-3$.  

  To prove the second inequality, note that by
  \autoref{Prop_Decay} for $\kappa\in(V\fM_t)_{\fI_t(x)}$ we
  have $|s_{1/\lambda}^*\kappa|(x) \leq
  c\lambda^2/(\lambda+|x|)^3\|\kappa\|_{L^2}$ and thus
  \begin{align*}
    \int_{N_xP} \<a,\chi^+_ts_{1/\lambda}^*\kappa\>
    &\leq c \int_0^{\sqrt\lambda}
    \lambda^{2-\delta}(\lambda+r)^{\ell+\delta-3}r^3 \rd r 
        \cdot \|a\|_{L^\infty_{\ell,\delta;t,\lambda}}\|\kappa\|_{L^2}\\
    &\qquad +c \int_{\sqrt\lambda}^\sigma \lambda^2
    r^{\ell-\delta}(\lambda+r)^{-3} r^3 \rd r
     \cdot \|a\|_{L^\infty_{\ell,\delta;t,\lambda}}\|\kappa\|_{L^2} \\
    &\leq c\lambda^{3+\ell} \|a\|_{L^\infty_{\ell,\delta;t,\lambda}}\|\kappa\|_{L^2}
  \end{align*}
  since $\ell\leq-1$ and $\ell+\delta<-1$.  If $\kappa$ is an element
  of an orthonormal basis of $(V\fM_t)_{\fI_t(x)}$ with respect to
  $\<\iota_{t,\lambda}\cdot,\iota_{t,\lambda}\cdot\>$, then
  $\|\kappa\|_{L^2}\leq c/\lambda$ since for $\kappa_1,\kappa_2 \in
  (V\fM_t)_{\fI_t(x)}$
  \begin{equation*}
    \lambda^2 \<\kappa_1,\kappa_2\>_{L^2} \sim
    \<\chi^+_ts_{1/\lambda}^*\kappa_1,\chi^+_ts_{1/\lambda}^*\kappa_2\>_{L^2}
  \end{equation*}
  where $\sim$ means comparable uniformly in $t$ and $\lambda$.  Therefore,
  \begin{equation*}
    \|\pi_{t,\lambda} a\|_{L^\infty} \leq c \lambda^{1+\ell}
    \|a\|_{L^\infty_{\ell,\delta;t,\lambda}}.
  \end{equation*}
  The estimates on the Hölder norms follow by the same kind of
  argument.
\end{proof}


\section{Pregluing estimate}
\label{Sec_PregluingEstimate}

In the following we will need to differentiate various tensors over
$Y$ and $NP_t$ depending on $t\in(-T,T)$.  For tensors over $Y$ we
could simply differentiate using $\del_t$; however, $\del_t$ does not
drag $P_t$ along in a parallel fashion, which causes some additional
error terms to show up, and also it is preferable to differentiate
tensors over $Y$ and $NP_t$ in way that is consistent with the
identification $V_{I,t}=U_{I,t}$ for $I\subset [0,2\sigma)$.
Therefore we use a fixed set of connections constructed as follows:
For each $t \in (-T',T')$ we can write $P_t = \{ \exp_p(v_t) : p \in P_0 \}$ for
some unique normal vector field $v_t \in \Gamma(P_0,NP_0)$; hence, the
bundle $\coprod_{t\in(-T',T')} P_t \to (-T',T')$ comes with a canonical
connection.  Pick a connection on $\sN:=\coprod_{t\in(-T',T')} NP_t \to
(-T',T')$ such that for each parallel path $t\mapsto p_t\in P_t$ its
lift to the zero section $t\mapsto 0_{p_t}\in NP_t$ is also parallel.
Moreover, we pick a connection on $Y\times(-T',T') \to (-T',T')$ which
agrees with the connection on $\sN$ on $\bigcup_{t\in(-T',T')}
V_{[0,2\sigma),t}=\bigcup_{t\in(-T',T')} U_{[0,2\sigma),t}$ and with
$\del_t$ on $Y\setminus \bigcup_{t\in(-T',T')} V_{[0,4\sigma),t}$.
These connections induce various connections on bundles of tensors
over $Y$ and $NP_t$; we denote the associated covariant derivatives by
$\nabla_t$.

\begin{prop}
  \label{Prop_PregluingEstimate}
  There is a constant $c>0$ such that for $t\in(-T',T')$ and
  $\lambda\in(0,\Lambda]$ we have
  \begin{equation*}
    \|e_{t,\lambda}\|_{C^{0,\alpha}_{-2,0;t,\lambda}}\leq c \lambda^2
    \qandq
    \|\nabla_t e_{t,\lambda}\|_{C^{0,\alpha}_{-2,0;t,\lambda}}\leq c \lambda^2.
  \end{equation*}
\end{prop}

The proof of this result requires some preparation.

\begin{prop}
  \label{Prop_PsiExpansion}
  In the tubular neighbourhood $V_{[0,\sigma);t}$ of $P_t$ we can
  write $\psi_t:=\Theta(\phi_t)=*_{\phi_t}\phi_t$ as
  \begin{equation*}
    \psi_t=\psi_{0;t} + \psi_{1;t} + \psi_{\geq 2;t}
  \end{equation*}
  where $\psi_{0;t}$ is defined as in equation~\eqref{Eq_psi0},
  $\psi_{1;t}$ takes values in $\Lambda^2 T^*P_t\otimes \Lambda^+
  N^*P_t$.  Moreover, $\psi_{0;t}$, $\psi_{1;t}$ and $\psi_{\geq 2;t}$
  depend continuously differentiably on $t$, and there is a constant
  $c>0$ such that for all $t\in(-T',T')$ we have
  \begin{equation*}
    \|\psi_{0;t}\|_{C^{0,\alpha}_{0,0;t,\lambda}(V_{[0,\sigma);t})}
    +\|\psi_{1;t}\|_{C^{0,\alpha}_{1,0;t,\lambda}(V_{[0,\sigma);t})}
    + \|\psi_{\geq 2;t}\|_{C^{0,\alpha}_{2,0;t,\lambda}(V_{[0,\sigma);t})}
   \leq c
   \end{equation*}
   and
   \begin{equation*}
    \|\nabla_t\psi_{0;t}\|_{C^{0,\alpha}_{0,0;t,\lambda}(V_{[0,\sigma);t})}
    +\|\nabla_t\psi_{1;t}\|_{C^{0,\alpha}_{1,0;t,\lambda}(V_{[0,\sigma);t})}
    +\|\nabla_t\psi_{\geq  2;t}\|_{C^{0,\alpha}_{2,0;t,\lambda}(V_{[0,\sigma);t})}
    \leq c.
  \end{equation*}
\end{prop}

\begin{proof}[Proof of \autoref{Prop_PsiExpansion}]
  Let $P=P_t$ and $\psi=\psi_t$.  If we pull the identity map of a
  tubular neighbourhood of $P$ back to a tubular neighbourhood of the
  zero section of $NP$ via the exponential map, then the Taylor
  expansion of its derivative around $P$ can be expressed in the
  splitting $TNP=\pi_1^*TP\oplus\pi_2^*NP$ as
  \begin{equation*}
    (x,y) \mapsto (x, y) + \(\rII_y(x),y\) + O\(|y|^2\)
  \end{equation*}
  where $\rII$ is the second fundamental form of $P$ in $Y$ which we
  think of as a map from $NP$ to $\End(TP)$.  This immediately yields
  the desired expansion of $\psi$ near $P$, with $\psi_1$ taking
  values in $\Lambda^2 T^*P\otimes \Lambda^+ N^*P$, since we know that
  $\psi$ is given by $\psi_0$ along $P$.  Moreover, we have $\nabla^k
  \psi_1 = O\(|y|^{1-k}\)$ and $\nabla^k \psi_{\geq 2} =
  O\(|y|^{2-k}\)$ for $k=0,1$ which implies the first estimate and,
  since everything depends smoothly on $t$, also the second estimate.
\end{proof}

The same reasoning also proves the following result.

\begin{prop}\label{Prop_*Expansion}
  There is a constant $c>0$ such that for all $t\in(-T',T')$ and
  $\lambda\in(0,\Lambda]$ we have
  \begin{equation*}
    \|*_0-*\|_{C^{0,\alpha}_{1,0;t,\lambda}(V_{[0,\sigma);t})}
    +\|\nabla_t(*_0-*)\|_{C^{0,\alpha}_{1,0;t,\lambda}(V_{[0,\sigma);t})}
   \leq c.
  \end{equation*}
\end{prop}

\begin{prop}\label{Prop_F}
  There is a constant $c>0$ such that for all $t\in(-T',T')$ and
  $\lambda\in(0,\Lambda]$ we have
  \begin{align*}
   \left\|F_{I_{t,\lambda}}^{2,0}-F_{B_t|_{P_t}}\right\|_{C^{0,\alpha}_{-2,0;t,\lambda}(V_{[0,\sigma);t})}+
\left\|\nabla_t\(F_{I_{t,\lambda}}^{2,0}-F_{B_t|_{P_t}}\)\right\|_{C^{0,\alpha}_{-2,0;t,\lambda}(V_{[0,\sigma);t})}
      &\leq c\lambda^2,  \\
         \left\|F_{I_{t,\lambda}}^{1,1}\right\|_{C^{0,\alpha}_{-3,0;t,\lambda}(V_{[0,\sigma);t})}
+   \left\|\nabla_t F_{I_{t,\lambda}}^{1,1}\right\|_{C^{0,\alpha}_{-3,0;t,\lambda}(V_{[0,\sigma);t})}   
   &\leq c \lambda^2 \\ \andq
       \left\|F_{I_{t,\lambda}}^{0,2}\right\|_{C^{0,\alpha}_{-4,0;t,\lambda}(V_{[0,\sigma);t})}+
     \left\|\nabla_tF_{I_{t,\lambda}}^{0,2}\right\|_{C^{0,\alpha}_{-4,0;t,\lambda}(V_{[0,\sigma);t})}      &\leq c \lambda^2.
    \end{align*}
\end{prop}

\begin{proof}
  \autoref{Thm_DSHaydys} asserts that the restriction of $I_t=I(\fI_t)$ to the section at infinity agrees with $B_t|_{P_t}$.
  For a local coordinate system $(z_1,\ldots,z_3,w_1,\ldots,w_4)$ based at a point on the section at infinity and with $z_i$ denoting the coordinates along $P_t$ and $w_i$ denote transverse coordinates we can write
  \begin{equation*}
    I_t = B_t|_{P_t} + \sum_{i,j} w_i(\xi_{ij} \rd z_j + \eta_{ij} \rd w_j) + O(|w|^2)
  \end{equation*}
  for $\xi_{ij},\eta_{ij} \in \fg$.
  It follows that $F_{I_t}^{1,1} = -\sum_{i,j=1}^4 \xi_{ij} \rd z_i \wedge \rd w_j + O(|w|)$.
  However, by \autoref{Prop_UniversalConnection} and the paragraph after \autoref{Thm_DSHaydys}, for any fixed $v \in T_xP_t$, $i(v)F_{I_t}^{1,1} \in \ker \delta_{\fI_t(p)}$;
  hence, by \autoref{Prop_Decay} this curvature component decays like $r^{-3}$ when viewed from the zero section.
  This translates into $\xi_{ij} = 0$, and we can write
  \begin{equation}
    \label{Eq_IFromInfty}
    I_t = B_t|_{P_t} + \sum_{i,j=1}^4 \eta_{ij} w_i\rd w_j + O(|w|^2).
  \end{equation}

  Hence, $F_{I_t}^{2,0}-F_{B_t|_{P_t}}$ vanishes to first order along the section at infinity which when viewed from the zero section in $NP_t$ means that
  \begin{equation*}
    \left|F_{I_t}^{2,0}-F_{B_t|_{P_t}}\right|\leq\frac{c}{1+|y|^2}.
  \end{equation*}
  The first estimate now follows from a simple scaling consideration
  and by realising that the above reasoning also applies to
  $\nabla_t\(F_{I_t}^{2,0}-F_{B_t|_{P_t}}\)$.

  The last two estimates follow from \autoref{Thm_DSHaydys},
  \autoref{Prop_Decay}, the fact that the curvature of a
  finite energy ASD instanton decays at least like $|y|^{-4}$ and
  simple scaling considerations.
\end{proof}

\begin{prop}\label{Prop_iae}
  There is a constant $c>0$ such that for all $t\in(-T',T')$ and
  $\lambda\in(0,\Lambda]$ we have
  \begin{align*}
    \|i_{t,\lambda}\|_{C^{0,\alpha}_{-3,0;t,\lambda}(V_{(\lambda,\sigma);t})}
    +\|\rd_{I_{t,\lambda}}i_{t,\lambda}\|_{C^{0,\alpha}_{-4,0;t,\lambda}(V_{(\lambda,\sigma);t})}
   &\leq c\lambda^2 \qand \\
    \|\nabla_ti_{t,\lambda}\|_{C^{0,\alpha}_{-3,0;t,\lambda}(V_{(\lambda,\sigma);t})}
    +\|\nabla_t(\rd_{I_{t,\lambda}}i_{t,\lambda})\|_{C^{0,\alpha}_{-4,0;t,\lambda}(V_{(\lambda,\sigma);t})}
   &\leq c
  \lambda^2 \intertext{as well as}
    \|b_t\|_{C^{0,\alpha}_{1,0;t,\lambda}(V_{[0,\sigma);t})}
    +\|\rd_{B_t|_{P_t}}
    b_t\|_{C^{0,\alpha}_{0,0;t,\lambda}(V_{[0,\sigma);t})}
   &\leq c \qand \\
    \|\nabla_tb_t\|_{C^{0,\alpha}_{1,0;t,\lambda}(V_{[0,\sigma);t})}
    + \|\nabla_t(\rd_{B_t|_{P_t}}b_t)\|_{C^{0,\alpha}_{0,0;t,\lambda}(V_{[0,\sigma);t})}
   &\leq c.
  \end{align*}
\end{prop}

\begin{proof}
  The first two estimates follow from \eqref{Eq_IFromInfty} and a
  simple scaling consideration, while the last two estimates follow
  from the fact that we put $B_t$ into radial gauge from the zero section
  in $NP_t$.
\end{proof}

\begin{proof}[Proof of \autoref{Prop_PregluingEstimate}]
  We proceed in four steps.
  First we estimate $\tilde e_{t,\lambda}$, an approximation  of $e_{t,\lambda}$.
  Then we estimate the
  difference $e_{t,\lambda}-\tilde e_{t,\lambda}$ separately in the
  three subsets $V_{[0,\lambda);t}$, $V_{[\lambda,\sigma/2);t}$ and
  $V_{[\sigma/2,\sigma);t}$ constituting $V_{[0,\sigma);t}$ which
  contains the support of $e_{t,\lambda}$.

  It will be convenient to use the following shorthand notation
  \begin{equation*}
    \|f\|_{{\ell,U}} := \|f\|_{C^{0,\alpha}_{\ell,0;t,\lambda}(U)} +
    \|\nabla_t f\|_{C^{0,\alpha}_{\ell,0;t,\lambda}(U)}.
  \end{equation*}
  Note that if $(f,g)\mapsto f\cdot g$ is a bilinear map satisfying
  $|f\cdot g|\leq|f||g|$ and the Leibniz rule with respect to
  $\nabla_t$, then it follows from \autoref{Prop_Multiplication} that
  $\|f\cdot g\|_{\ell_1+\ell_2,U}\leq \|f\|_{\ell_1,U}\cdot
  \|g\|_{\ell_2,U}$.

  \setcounter{step}{0}
  \begin{step}\label{Step_x1}
    The term
    \begin{equation*}
      \tilde e_{t,\lambda}:=*\left[\(F_{I_{t,\lambda}} -
        F_{B_t|_{P_t}}\)\wedge\psi_t\right] + \mu(t)\cdot \hat v \circ
      \fI_{t,\lambda}
    \end{equation*}
    satisfies $\|\tilde e_{t,\lambda}\|_{-2,V_{[0,\sigma);t}} \leq
    c\lambda^2$.
  \end{step}

  Because of \autoref{Thm_DSHaydysEq}, the fact that $F^{0,2}_{I_{t,\lambda}}$
  is anti-self-dual and \autoref{Prop_PsiExpansion} we can write $\tilde
  e_{t,\lambda}$ on $V_{[0,\sigma);t}$ as
  \begin{multline*}
    \tilde e_{t,\lambda}
    =*\left[\(F_{I_{t,\lambda}} - F_{B_t|_{P_t}}\)^{2,0}\wedge\psi_t\right] 
    +*\left[F_{I_{t,\lambda}}^{1,1}\wedge(\psi_{1;t}+\psi_{\geq 2;t})\right] \\
    +*\(F_{I_{t,\lambda}}^{0,2}\wedge \psi_{\geq 2;t}\)
    +(*-*_0)\(F_{I_{t,\lambda}}^{1,1}\wedge \psi_{0;t}\).
  \end{multline*}
  Using \autoref{Prop_PsiExpansion} and \autoref{Prop_F} as well as
  $\|1\|_{-1,V_{[0,\sigma);t}}\leq c$ we estimate $\|\tilde
  e_{t,\lambda}\|_{-2,V_{[0,\sigma);t}}$ by
  \begin{align*}
   &\left\|\(F_{I_{t,\lambda}}-F_{B_t|_{P_t}}\)^{2,0}\right\|_{-2,V_{[0,\sigma);t}}
         \cdot \|\psi_t\|_{0,V_{[0,\sigma);t}}
  \\&\qquad
    +\left\|F_{I_{t,\lambda}}^{1,1}\right\|_{-3,V_{[0,\sigma);t}}
     \cdot\(\|\psi_{1;t}\|_{1,V_{[0,\sigma);t}}+\|1\|_{-1,V_{[0,\sigma);t}}
            \cdot\|\psi_{\geq 2;t}\|_{2,V_{[0,\sigma);t}}\)
  \\&\qquad
    +\left\|F_{I_{t,\lambda}}^{0,2}\right\|_{-4,V_{[0,\sigma);t}}
     \cdot\|\psi_{\geq 2;t}\|_{2,V_{[0,\sigma);t}}
  \\&\qquad
    +\|*-*_0\|_{1,V_{[0,\sigma);t}}
     \cdot\left\|F_{I_{t,\lambda}}^{1,1}\right\|_{-3,V_{[0,\sigma);t}}
     \cdot\|\psi_{0;t}\|_{0,V_{[0,\sigma);t}}
    \leq c \lambda^2. 
  \end{align*}
  This proves the assertion.

  \begin{step}\label{Step_ee2}
    We prove that $\|e_{t,\lambda}-\tilde
    e_{t,\lambda}\|_{V_{[0,\lambda);t}}\leq c\lambda^2$.
  \end{step}

  Since
  \begin{equation*}
    \left\|F_{B_t|_{P_t}} \wedge \psi_t\right\|_{-2,V_{[0,\lambda);t}}
    \leq \|1\|_{-2,V_{[0,\lambda);t}} \cdot
         \left\|F_{B_t|_{P_t}} \wedge \psi_t\right\|_{0,V_{[0,\lambda);t}} 
    \leq c\lambda^2,
  \end{equation*}
  it suffices to estimate $F_{A_{t,\lambda}} - F_{I_{t,\lambda}}$ in
  $V_{[0,\lambda);t}$.  Now, in $V_{[0,\lambda);t}$ the curvature of
  $A_{t,\lambda}$ is given by
  \begin{equation*}
    F_{A_{t,\lambda}}
    = F_{I_{t,\lambda}}
    + \chi^-_{t,\lambda} \rd_{I_{t,\lambda}} b_t
    + \frac12 (\chi^-_{t,\lambda})^2 [b_t\wedge b_t]
    + \rd \chi^-_{t,\lambda} \wedge b_t.
  \end{equation*}
  Using \autoref{Prop_iae} and the fact that the cut-off
  functions $\chi_{t,\lambda}^-$ where constructed so that
  $\|\chi^-_{t,\lambda}\|_{0,V_{[0,\sigma)}} +
  \|\rd\chi^-_{t,\lambda}\|_{-1,V_{[0,\sigma)}}\leq c$ we obtain
  \begin{align*}
    &\|F_{A_{t,\lambda}}-F_{I_{t,\lambda}}\|_{-2,V_{[0,\lambda);t}} \\
    &\qquad
    \leq 
       \|1\|_{-2,V_{[0,\lambda);t}} \cdot
       \|\chi^-_{t,\lambda}\|_{0,V_{[0,\lambda);t}} \cdot
       \|\rd_{B_t|_{P_t}} b_t\|_{0,V_{[0,\lambda);t}} \\
    &\qquad\quad 
    +\|\chi^-_{t,\lambda}\|_{0,V_{[0,\lambda);t}} \cdot
     \|i_{t,\lambda}\|_{-3,V_{(\lambda,\sigma);t}} \cdot
     \|b_t\|_{1,V_{[0,\lambda);t}} \\
    &\qquad\quad
    +\frac12
     \|1\|_{-4,V_{[0,\lambda);t}} \cdot 
     \|\chi^-_{t,\lambda}\|_{0,V_{[0,\lambda);t}}^2 \cdot
     \|b_t\|_{1,V_{[0,\lambda);t}}^2     \\
    &\qquad\quad
    +\|1\|_{-2,V_{[0,\lambda);t}} \cdot
    \|\rd\chi^-_{t,\lambda}\|_{-1,V_{[0,\lambda);t}} \cdot
     \|b_t\|_{1,V_{[0,\lambda);t}} 
    \leq c \lambda^2.
  \end{align*}
 
  \begin{step}
    We prove that $\|e_{t,\lambda}-\tilde
    e_{t,\lambda}\|_{V_{(\lambda,\sigma/2);t}}\leq c\lambda^2$.
  \end{step}

  This is an immediate consequence of $F_{B_t}\wedge\psi_t=0$ and
  \autoref{Prop_iae} since in $V_{[\lambda,\sigma/2);t}$ the
  curvature of $A_{t,\lambda}$ is given by
  $F_{A_{t,\lambda}}=F_{B_t}+[i_{t,\lambda}\wedge b_t]+F_{I_{t,\lambda}}-F_{B_t|_{P_t}}$.

  \begin{step}\label{Step_ee4}
    We prove that $\|e_{t,\lambda}-\tilde
    e_{t,\lambda}\|_{V_{[\sigma/2,\sigma);t}}\leq c\lambda^2$.
  \end{step}

  In $V_{[\sigma/2,\sigma);t}$ the curvature of $A_{t,\lambda}$ is
  given by
  \begin{equation*}
    F_{A_{t,\lambda}}
    = F_{B_t}
    + \chi^+_t  \rd_{B_t} i_{t,\lambda}
    + \frac12(\chi^+_t)^2 [i_{t,\lambda}\wedge i_{t,\lambda}]
    + \rd \chi^+_t \wedge i_{t,\lambda}.
  \end{equation*}
  Since $\|\chi^+_t\|_{\ell,V_{[\sigma/2,\sigma);t}} +
  \|\rd\chi^+_t\|_{\ell,V_{[\sigma/2,\sigma);t}} \leq c$, it follows
  that
  \begin{align*}
    &\|F_{A_{t,\lambda}}-F_{B_t}\|_{-2,V_{[\sigma/2,\sigma);t}} \\
    &\qquad
    \leq 
      \|\chi^+_t\|_{2,V_{[\sigma/2,\sigma);t}} \cdot
      \|\rd_{I_{t,\lambda}}i_{t,\lambda}\|_{-4,V_{[\sigma/2,\sigma);t}}\\
    &\qquad\quad
    +\|\chi^+_t\|_{0,V_{[\sigma/2,\sigma);t}} \cdot
     \|b_t\|_{1,V_{[\sigma/2,\sigma);t}} \cdot
     \|i_{t,\lambda}\|_{-3,V_{[\sigma/2,\sigma);t}}\\
    &\qquad\quad
    +\frac12\|\chi^+_t\|_{2,V_{[\sigma/2,\sigma);t}}^2 \cdot
     \|i_{t,\lambda}\|_{-3,V_{[\sigma/2,\sigma);t}}^2\\
    &\qquad\quad
    +\|\rd\chi^+_t\|_{1,V_{[\sigma/2,\sigma);t}} \cdot
     \|i_{t,\lambda}\|_{-3,V_{[\sigma/2,\sigma);t}} \leq c\lambda^2.
  \end{align*}
  We are thus left with estimating
  \begin{equation*}
    \|\iota_{t,\lambda} \hat v \circ \fI_{t} - \hat v \circ
    \fI_{t,\lambda}\|_{-2,V_{[\sigma/2,\sigma);t}}
    \leq c \|\chi_t^+-1\|_{1,V_{[\sigma/2,\sigma);t}} \cdot \|\hat v \circ
    \fI_{t,\lambda}\|_{-3,V_{[\sigma/2,\sigma);t}}.
  \end{equation*}
  To conclude the proof we observe that
  $\|\chi_t^+-1\|_{1,V_{[\sigma/2,\sigma);t}}\leq c$ and that 
  \begin{equation}\label{Eq_hj}
    \|\hat\fI_\lambda\|_{C^{k,\alpha}_{-3,0;t,\lambda}(V_{[0,\sigma);t})} \leq c
    \lambda^2 \|\hat\fI\|_{C^{k,\alpha}}
  \end{equation}
  as a consequence of \autoref{Prop_Decay} and a simple
  scaling consideration.
\end{proof}


\section{Linear estimates}
\label{Sec_LinearAnalysis}

We denote by $\fX_{t,\lambda}$ and $\fY_{t,\lambda}$ the Banach spaces
$C^{1,\alpha}\oplus\R$ and $C^{0,\alpha}\oplus\R$
equipped with the norms
\begin{align*}
  \|(\ua,\eta)\|_{\fX_{t,\lambda}}
  &:=\lambda^{-\delta/2}\|\rho_{t,\lambda}\ua\|_{C^{1,\alpha}_{-1,\delta;t,\lambda}}
   +\lambda\|\pi_{t,\lambda}\ua\|_{C^{1,\alpha}}
   +\lambda|\eta| \qand \\
  \|(\ua,\eta)\|_{\fY_{t,\lambda}}
  &:=\lambda^{-\delta/2}\|\rho_{t,\lambda}\ua\|_{C^{0,\alpha}_{-2,\delta;t,\lambda}}
   +\lambda\|\pi_{t,\lambda}\ua\|_{C^{0,\alpha}}
   +\lambda|\eta|,
\end{align*}
respectively.
Here we fixed $\delta\in(-1,0)$ and $0<\alpha\ll|\delta|$.
For concreteness one may take $\delta=-\frac12$ and $\alpha=\frac{1}{256}$.
It will become apparent in the course of this section that the choice of the relative weights between terms involving $\rho_{t,\lambda}\ua$ and those involving $\pi_{t,\lambda}\ua$ is not completely unnatural.
We consider the linear operator $\bL_{t,\lambda}\co \fX_{t,\lambda}\to \fY_{t,\lambda}$ defined by
\begin{equation*}
  \bL_{t,\lambda}(\ua,\eta)
  :=\(L_{t,\lambda}+\eta\cdot\iota_{t,\lambda}\hat v\circ\fI_t,
      \<\pi_{t,\lambda}\ua,\hat v\circ\fI_{t}\>\).
\end{equation*}

\begin{prop}\label{Prop_inverse}
  For all $t\in(-T',T')$ and $\lambda\in(0,\Lambda]$ the linear
  operator $\bL_{t,\lambda}$ is invertible, $\bL_{t,\lambda}^{-1}$
  depends continuously differentiably on $t$ and continuously on
  $\lambda$ and, moreover, there exists a constant $c>0$, which is
  independent of $t\in(-T',T')$ and $\lambda\in(0,\Lambda]$, such that
  \begin{align}
    \label{Eq_l}
    \|\bL_{t,\lambda}^{-1}(\ub,\zeta)\|_{\fX_{t,\lambda}}
    &\leq  c\|(\ub,\zeta)\|_{\fY_{t,\lambda}} \quad\text{and} \\
    \label{Eq_dl}
    \|\nabla_t\bL_{t,\lambda}^{-1}(\ub,\zeta)\|_{\fX_{t,\lambda}} &\leq
    c\|(\ub,\zeta)\|_{\fY_{t,\lambda}}.
  \end{align}
\end{prop}

The key to this proposition is the following estimate which we will
prove in the course of this section.

\begin{prop}\label{Prop_linear_estimate}
  There exists a constant $c>0$, which is independent of
  $t\in(-T',T')$ and $\lambda\in(0,\Lambda]$, such that
  \begin{align}
    \label{Eq_l0}
    \|(\ua,\eta)\|_{\fX_{t,\lambda}}
    &\leq  c\|\bL_{t,\lambda}(\ua,\eta)\|_{\fY_{t,\lambda}}.
  \end{align}
\end{prop}

\begin{prop}\label{Prop_dle}
  The family of operators $\bL_{t,\lambda}\co \fX_{t,\lambda}\to
  \fY_{t,\lambda}$ depends continuously differentiably on $t$ and
  continuously on $\lambda$ and there exists a constant $c>0$ such
  that for all $t\in(-T',T')$ and $\lambda\in(0,\Lambda]$ we have
 \begin{align*}
    \|\bL_{t,\lambda}(\ua,\eta)\|_{\fY_{t,\lambda}}
    &\leq c\|(\ua,\eta)\|_{\fX_{t,\lambda}} \quad\text{and} \\   
    \|\nabla_t\bL_{t,\lambda}(\ua,\eta)\|_{\fY_{t,\lambda}}
    &\leq c\|(\ua,\eta)\|_{\fX_{t,\lambda}}.
  \end{align*}
\end{prop}

\begin{proof}[Proof of \autoref{Prop_inverse}]
  By \autoref{Prop_linear_estimate} the operator
  $\bL_{t,\lambda}$ is injective and has closed range.  Hence, we can
  identify its cokernel with the kernel of $\bL_{t,\lambda}^*$.  Since
  $\bL_{t,\lambda}$ is formally self-adjoint, it follows from elliptic
  regularity that the kernel of $\bL_{t,\lambda}^*$ agrees with the
  kernel of $\bL_{t,\lambda}^*$ and thus is trivial.  Therefore,
  $\bL_{t,\lambda}$ is invertible.  Now, \eqref{Eq_l} follows at once
  from \eqref{Eq_l0}.  Since $\bL_{t,\lambda}$ depends continuously
  differentiably on $t$ and continuously on $\lambda$, so does
  $\bL_{t,\lambda}$.  Since
  $\nabla_t\bL_{t,\lambda}^{-1}=-\bL_{t,\lambda}^{-1}\nabla_t\bL_{t,\lambda}\bL_{t,\lambda}^{-1}$,
  \eqref{Eq_dl} follows from \eqref{Eq_l} and \autoref{Prop_dle}.
\end{proof}

\subsection{The model operator on \texorpdfstring{$\R^7$}{R7}}
\label{Sec_Model}

Let $I$ be a finite energy ASD instanton on a $G$--bundle $E$ over $\R^4$.
By a slight abuse of notation we denote the pullbacks of $I$
and $E$ to $\R^7=\R^3\oplus\R^4$ by $I$ and $E$ as well.
We define $L_I\co \Omega^0(\R^7,\fg_E)\oplus\Omega^1(\R^7,\fg_E) \to \Omega^0(\R^7,\fg_E)\oplus\Omega^1(\R^7,\fg_E)$ by
\begin{equation*}
  \begin{pmatrix}
    0 & \rd_I^* \\
    \rd_I & *(\psi_0\wedge\rd_I)
  \end{pmatrix}
\end{equation*}
where
\begin{gather*}
  \psi_0:=\frac12 \omega_1\wedge\omega_1
      -\rd x^{23}\wedge\omega_1
      -\rd x^{31}\wedge\omega_2
      -\rd x^{12}\wedge\omega_3 \qand \\
  \omega_1:=\rd x^{45}+\rd x^{67}, \quad
  \omega_2:=\rd x^{46}-\rd x^{47} \quad\text{and}\quad
  \omega_3:=\rd x^{47}+\rd x^{56}.
\end{gather*}
Denote by $\pi_{\R^4}\co\R^3\oplus\R^4\to\R^4$ the projection onto the second summand and define weight functions
\begin{align}\label{Eq_w}
  w(x) := 1 + |\pi_{\R^4}(x)| \qandq
  w(x,y) := \min\{w(x),w(y)\}.
\end{align}
For a Hölder exponent $\alpha\in(0,1)$ and a weight parameter $\beta\in\R$ we define
\begin{align*}
  [f]_{C^{0,\alpha}_{\beta}(U)}
  &:= \sup_{d(x,y) \leq w(x,y)}
        w(x,y)^{\alpha-\beta} \frac{|f(x)-f(y)|}{d(x,y)^\alpha}, \\
  \|f\|_{L^{\infty}_{\beta}(U)}
  &:=\|w^{-\beta}f\|_{L^\infty(U)} \qand \\
  \|f\|_{C^{k,\alpha}_{\beta}(U)}
  &:= \sum_{j=0}^k \|\nabla^j f\|_{L^{\infty}_{\beta-j}(U)}
                + [\nabla^j f]_{C^{0,\alpha}_{\beta-j}(U)}.
\end{align*}
Here $f$ is a section of a vector bundle over $U\subset\R^7$ equipped with an inner product and a compatible connection.
We use parallel transport to compare the values of $f$ at different points.
If $U$ is not specified, then we take $U=\R^7$.
We denote by $C^{k,\alpha}_{\beta}$ the subspace of elements $f$ of the Banach space $C^{k,\alpha}$ with $\|f\|_{C^{k,\alpha}_\beta}<\infty$ equipped with the norm $\|\cdot\|_{C^{k,\alpha}_\beta}$.

The linear operators $L_I$ can serve as a model for $L_{t,\lambda}$ in the following sense:
Fix $t\in(0,T']$ and $x\in P_t$.
Set $I:=I(\fI)|_{N_xP_t}$ and $E:=E(\fI_t)|_{N_xP_t}$.
Identify $T_xY=T_xP_t\oplus N_xP_t$ with $\R^7=\R^3\oplus\R^4$ in such a way that the summands are preserved and $\psi_t|_{T_xY}$ is identified with $\psi_0$.
For $\epsilon_1,\epsilon_2>0$ we define $V_{\epsilon_1,\epsilon_2;t}$ to be the open set which under the exponential map based at $x$ is identified with $\tilde U_{\epsilon_1,\epsilon_2} := B_{\epsilon_1}(0) \times B_{\epsilon_2}(0) \subset \R^3\oplus \R^4$.
With respect to this identification a $\fg_{E_{t,\lambda}}$--valued tensor field $f$ on $V_{\epsilon_1,\epsilon_2;t}$ is identified with a $s_{1/\lambda}^*\fg_E$--valued tensor field $\tilde f$ on $\tilde U_{\epsilon_1,\epsilon_2;\lambda}$, and if $k\in\N$ is a scaling parameter, then with $f$ we can associate a $\fg_E$--valued tensor field $s_{k,\lambda}f$ on $U_{\epsilon_1,\epsilon_2;\lambda}:=\lambda^{-1}\tilde U_{\epsilon_1,\epsilon_2}$ defined by
\begin{equation*}
  (s_{k,\lambda} f)(x,y) := \lambda^k \tilde f(\lambda x, \lambda y).
\end{equation*}

\begin{prop}\label{Prop_R7ModelCompare}
  There is are constants $c,\epsilon_0>0$ such that for $\epsilon\in(0,\epsilon_0]$, $t\in(-T',T')$ and $\lambda\in(0,\Lambda]$ we  have
  \begin{gather*}
    \frac1c    
    \|s_{k,\lambda}f\|_{L^\infty_{\ell+\delta}\(U_{\epsilon,\sqrt\lambda;\lambda_i}\)}\leq
     \lambda^{k+\ell}\|f\|_{L^\infty_{\ell,\delta;t,\lambda}\(V_{\epsilon,\sqrt\lambda;t}\)}
     \leq c
    \|s_{k,\lambda}f\|_{L^\infty_{\ell+\delta}\(U_{\epsilon,\sqrt\lambda;\lambda_i}\)}, \\
    \frac1c    
    \|s_{k,\lambda}f\|_{C^{k,\alpha}_{\ell+\delta}\(U_{\epsilon,\sqrt\lambda;\lambda_i}\)}\leq
     \lambda^{k+\ell}\|f\|_{C^{k,\alpha}_{\ell,\delta;t,\lambda}\(V_{\epsilon,\sqrt\lambda;t}\)}
     \leq c
    \|s_{k,\lambda}f\|_{C^{k,\alpha}_{\ell+\delta}\(U_{\epsilon,\sqrt\lambda;\lambda_i}\)}
 \intertext{and}
    \left\|L_{t,\lambda} \ua - s_{2,\lambda}^{-1} L_I s_{1,\lambda}
      \ua\right\|_{C^{0,\alpha}_{-2,\delta;t,\lambda}\(V_{\epsilon,\sqrt\lambda;t}\)}
    \leq c(\epsilon+\sqrt\lambda)
    \left\|\ua\right\|_{C^{1,\alpha}_{-1,\delta;t,\lambda}\(V_{\epsilon,\sqrt\lambda;t}\)}.
  \end{gather*}
\end{prop}

To better understand $L_I$ it is useful to rewrite it as follows.

\begin{prop}
  \label{Prop_LISquared}
  If we identify $T^*\R^3$ with $\Lambda^+$ via $\rd x^i\mapsto-\omega_i$ and accordingly $\Omega^0(\R^7,\frg_E)\oplus\Omega^1(\R^7,\frg_E)$ with $\Omega^0(\R^7,\(\R\oplus T^*\R^3\oplus T^*\R^4\)\otimes\frg_E)$, then the linear operator $L_I$ can be written as $L_I=F+D_I$ where
  \begin{align*}
    F(\xi,\omega,a) &:= \sum_{i=1}^3 \(-\<\del_i\omega, \omega_i\>,
    \del_i \xi \cdot \omega_i, *_4(\del_i a\wedge\omega_i)\), \\
    D_I&:=\begin{pmatrix}
      0 & \delta_I \\
      \delta_I^* & 0
    \end{pmatrix} \qand
  \end{align*}
  $\delta_I\co\Omega^1(\R^4,\frg_E)\to\Omega^0(\R^4,\frg_E)\oplus\Omega^+(\R^4,\frg_E)$ is as defined in \eqref{Eq_deltaI}.
  Moreover,
  \begin{equation*}
    L_I^*L_I = \Delta_{\R^3}  +
    \begin{pmatrix}
      \delta_I\delta_I^* & \\
      & \delta_I^*\delta_I
    \end{pmatrix}.
  \end{equation*}
\end{prop}

\begin{proof}
  This is easy to verify by a straight-forward computation.
\end{proof}

Let us now recall a key result from \cite[Appendix A]{Walpuski2011}.

\begin{definition}
  A Riemannian manifold $X$ is said to be of \emph{bounded geometry} if it is complete, its Riemann curvature tensor is bounded from above and its injectivity radius is bounded from below.
  A vector bundle over $X$ is said to be of \emph{bounded geometry} if it has trivialisations over balls of a fixed radius such that the transitions functions and all of their derivatives are uniformly bounded.
  We say that a complete oriented Riemannian manifold $X$ has \emph{subexponential volume growth} if for each $x\in X$ the function $r\mapsto\vol\(B_r(x)\)$ grows subexponentially as $r\to\infty$.
\end{definition}

\begin{lemma}\label{Lem_Liouville}
  Let $E$ be a vector bundle of bounded geometry over a Riemannian manifold $X$ of bounded geometry with subexponential volume growth and suppose that $D\co C^\infty(X,E)\to{}C^\infty(X,E)$ is a uniformly elliptic operator of second order whose coefficients and their first derivatives are uniformly bounded, that is non-negative, i.e., $\<Da,a\>\geq 0$ for all $a \in W^{2,2}(X,E)$, and formally self-adjoint.
  If $a \in C^\infty(\R^n\times X,E)$ satisfies
  \begin{equation*}
    (\Delta_{\R^n}+D)a=0
  \end{equation*}
  and $\|a\|_{L^\infty}$ is finite, then $a$ is constant in the $\R^n$--direction, that is $a(x,y)=a(y)$. 
  Here, by slight abuse of notation, we denote the pullback of $E$ to $\R^n\times X$ by $E$ as well.
\end{lemma}

\begin{remark}
  The statement in \cite[Appendix~A]{Walpuski2011} also requires $\|\nabla a\|_{L^\infty}$ to be finite.
  This, however, can be deduced from $\|a\|_{L^\infty}<\infty$, elliptic estimates and the equation $(\Delta_{\R^n}+D)a=0$.
\end{remark}

\begin{cor}
  \label{Cor_LITranslationInvariantKernel}
  If $\ua\in \Omega^0(\R^7,\frg_E)\oplus\Omega^1(\R^7,\frg_E)$ satisfies $L_I=0$ and $\|\ua\|_{L^\infty}$ is finite, then $\ua$ is the pullback of an element in the kernel of $\delta_I$.
\end{cor}

\begin{prop}
  \label{Prop_ModelSchauderEstimate}
  For $\beta\in\R$ there is a constant $c=c(I)>0$ depending continuously on $I$ such that the following estimate holds
  \begin{equation*}
    \|\ua\|_{C^{1,\alpha}_\beta}
    \leq c\(\|L_I \ua\|_{C^{0,\alpha}_{\beta-1}}
        + \|\ua\|_{L^\infty_\beta}\).
  \end{equation*}
\end{prop}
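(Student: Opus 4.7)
The plan is to prove this weighted Schauder estimate by the standard localise-and-rescale argument: establish a uniform unweighted interior Schauder estimate for $L_I$ on unit balls, and then transfer it to the weighted setting by a covering of $\R^7$ by balls whose radii are proportional to the weight function $w(x)=1+|x|$.

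As a first step I would establish the unweighted interior estimate
\begin{align*}
  \|\ua\|_{C^{1,\alpha}(B_{1/2}(x_0))}
    \leq c\(\|L_I \ua\|_{C^{0,\alpha}(B_1(x_0))}
          + \|\ua\|_{L^\infty(B_1(x_0))}\)
\end{align*}
with $c>0$ independent of $x_0\in\R^7$. This is the classical Schauder estimate for the first-order elliptic system $L_I$, which can be obtained by reduction to the second-order equation $L_I^*L_I\ua = L_I^*(L_I\ua)$ and standard interior estimates, or by a parametrix construction. The crucial point is that the constant can be chosen independent of $x_0$: the connection $I$ is pulled back from a fixed smooth finite-energy ASD instanton on $\R^4$, so on each unit ball one can pick an Uhlenbeck gauge in which the connection $1$--form has $C^{k,\alpha}$--norm bounded uniformly in $x_0$, while $\psi_0$ is constant.

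The second step is the rescaling. Given $x_0\in\R^7$, set $r:=w(x_0)/4$ and pull everything back under $x=x_0+r\tilde{x}$. The weighted norms in~\eqref{eq:wn} were designed so that on a ball of radius $r$, on which $w$ is comparable to $r$, the norm $\|\cdot\|_{C^{k,\alpha}_{\beta}(B_r(x_0))}$ is uniformly equivalent to $r^{-\beta}\|\cdot\|_{C^{k,\alpha}(B_1(0))}$ of the appropriately rescaled tensor, and the rescaled operator is again a first-order elliptic operator with uniformly bounded coefficients in $x_0$. Applying Step~1 to the rescaled tensor and translating back yields the local weighted estimate
\begin{align*}
  \|\ua\|_{C^{1,\alpha}_{\beta}(B_{r/2}(x_0))}
    \leq c\(\|L_I\ua\|_{C^{0,\alpha}_{\beta-1}(B_r(x_0))}
          + \|\ua\|_{L^\infty_{\beta}(B_r(x_0))}\)
\end{align*}
with $c$ independent of $x_0$.

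Globalisation is then immediate. Choose a locally finite cover of $\R^7$ by balls $B_{r_i/2}(x_i)$ with $r_i=w(x_i)/4$ and with bounded overlap of the doubled balls $B_{r_i}(x_i)$. Taking suprema controls the $L^\infty$--type contributions in the weighted norms, while the Hölder contributions are controlled because the defining constraint $d(x,y)\leq w(x,y)$ in $[\cdot]_{C^{0,\alpha}_{\beta-1}}$ forces any relevant pair $(x,y)$ to lie in a single $B_{r_i}(x_i)$ (after a harmless enlargement of the cover by a fixed factor). Summing up gives the stated estimate. The only nontrivial point in the whole argument is the uniformity of the Schauder constant in Step~1, which rests on Uhlenbeck gauge fixing applied on balls of unit size together with the decay of $I$ guaranteed by Proposition~\ref{prop:decay}; everything else is mechanical once the weighted norms are understood as scale-invariant versions of the usual ones.
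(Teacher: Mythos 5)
Your overall strategy (uniform interior Schauder estimates, rescaling at the scale of the weight, covering with bounded overlap) is the same as the paper's, but your implementation rests on a misreading of the weight function, and this is a genuine gap. In this model the weight is comparable to $1+|y|$, a function of the $\R^4$--coordinate only, not of the distance to the origin of $\R^7$: this is forced by the comparison with the weighted norms on $Y$ near $P_t$ (Proposition~\ref{prop:cmp}, where after rescaling the weight corresponds to $1+{}$distance to $P_t$) and is what the surrounding arguments use (translation invariance in the $\R^3$--direction, the identification of $\ker L_I$, the blow-up analysis with $\tilde w(x,y)=|y|$). Consequently your Step~2 breaks down on balls $B_r(x_0)$ centred at points $x_0=(x_0',y_0)$ with $|x_0'|$ large and $|y_0|$ small: there $r\sim 1+|x_0|$ is huge while the weight stays of order one, so the weighted norm on $B_r(x_0)$ is not comparable to $r^{-\beta}$ times an unweighted norm of the rescaled tensor; worse, the rescaled operator does not have uniformly bounded coefficients, because the ASD instanton $I$ has a fixed characteristic scale concentrated along $\R^3\times\{0\}$ and is not scale invariant, so pulling back by $x\mapsto x_0+r\tilde x$ with $r$ large blows up its coefficients. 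In fact, with your weight taken literally the stated estimate is false: for $\kappa\in\ker\delta_I$ and a cut-off $\chi$ on $\R^3$ supported where $|x'|\sim R$ with $|\rd\chi|\lesssim R^{-1}$, the element $\ua=(0,\chi\kappa)$ has $\|\ua\|_{C^{1,\alpha}_\beta}\gtrsim R^{1-\beta}$ while $\|L_I\ua\|_{C^{0,\alpha}_{\beta-1}}+\|\ua\|_{L^\infty_\beta}\lesssim R^{\alpha-\beta}$, so the ratio degenerates like $R^{1-\alpha}$ as $R\to\infty$.

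The repair is exactly the covering the paper uses: treat the cylinder $\{|y|\le R\}$ directly with unit-scale Schauder estimates (the constant is uniform by translation invariance of $I$ and $\psi_0$ in the $\R^3$--direction), and cover its complement by balls $B_{r_i/8}(x_i,y_i)$ with $r_i=|y_i|\ge R$, rescaled by $r_i$; on such balls the weight is comparable to $r_i$, and by the decay of $I$ (Proposition~\ref{prop:decay}) the rescaled coefficients are uniformly controlled, so your Step~1 applies there. With that change of scale the rest of your argument (bounded overlap, the restriction $d(x,y)\le w(x,y)$ localising the Hölder seminorm) goes through and reproduces the paper's proof.
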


\begin{proof}
  This is a standard result.
  The argument we use goes back to work of Nirenberg--Walker \cite[Theorem 3.1]{Nirenberg1973}.

  The desired estimate is local in the sense that is enough to prove estimates of the form
  \begin{equation*}
    \|\ua\|_{C^{1,\alpha}_{\beta}(U_i)} \leq c\big(\|L_I
    \ua\|_{C^{0,\alpha}_{\beta-1}} + \|\ua\|_{L^{\infty}_{\beta}}\big)
  \end{equation*}
  with $c>0$ independent of $i$, where $\{U_i\}$ is a suitable open cover of $\R^3\times X$.

  Fix $R>0$ suitably large and set $U_0:=\set{ (x,y)\in\R^3\times X : |\pi_{\R^4}(x)|\leq R }$.
  Then there clearly is a constant $c>0$ such that the above estimate holds for $U_i=U_0$.
  Pick a sequence $(x_i,y_i)\in\R^3\times{}X$ such that $r_i:=|\pi_{\R^4}(y_i)|\geq R$ and the balls $U_i:=B_{r_i/8}(x_i,y_i)$ cover the complement of $U_0$.
  On $U_i$, we have a Schauder estimate of the form
  \begin{gather*}
    \|\underline{a}\|_{L^\infty(U_i)} +
    r_i^{\alpha}[\underline{a}]_{C^{0,\alpha}(U_i)} + r_i\|\nabla_I
    \underline{a}\|_{L^\infty(U_i)} +
    r_i^{1+\alpha}[\nabla_I \underline{a}]_{C^{0,\alpha}(U_i)} \\
    \leq c\left(r_i\|L_I\underline a\|_{L^\infty(V_i)} +
      r_i^{1+\alpha}[L_I\underline{a}]_{C^{0,\alpha}(V_i)} +
      \|\underline{a}\|_{L^\infty(V_i)}\right)
  \end{gather*}
  where $V_i=B_{r_i/4}(x_i,y_i)$ and $\underline{a}=(\xi,a)$.
  By rescaling the balls $V_i$ to a ball of fixed radius one can see that the constant $c>0$ can be chosen to work for all $i$ simultaneously.
  Since on $V_i$ we have $\frac12 r_i \leq w \leq 2r_i$, multiplying the above Schauder estimate by $r_i^{-\beta}$ yields the desired local estimate.
\end{proof}

\subsection{Schauder estimate}

\begin{prop}\label{Prop_schauder}
  There is a constant $c>0$ such that for all $t\in(-T',T')$ and
  $\lambda\in(0,\Lambda]$ the following estimate holds
  \begin{equation}
    \|\ua\|_{C^{1,\alpha}_{-1,\delta;t,\lambda}}
    \leq c \(\|L_{t,\lambda}\ua\|_{C^{0,\alpha}_{-2,\delta;t,\lambda}}
         + \|\ua\|_{L^{\infty}_{-1,\delta;t,\lambda}}\).
  \end{equation}
\end{prop}

\begin{proof}
  It suffices to show that there is a constant $c>0$ such that for all
  $t\in(-T',T')$, $\lambda\in(0,\Lambda]$ and $x\in Y$ there
  exist open sets $U$ and $V$ such that
  \begin{equation*}
    \|\ua\|_{C^{1,\alpha}_{-1,\delta;t,\lambda}(U)}
    \leq c \(\|L_{t,\lambda}\ua\|_{C^{0,\alpha}_{-2,\delta;t,\lambda}(V)}
           + \|\ua\|_{L^{\infty}_{-1,\delta;t,\lambda}(V)}\).
  \end{equation*}
  For $x\in Y$ with $r_t(x)\leq \sqrt\lambda$ such an estimate follows
  from \autoref{Prop_R7ModelCompare} and \autoref{Prop_ModelSchauderEstimate}.  For $x\in Y$
  with $r_t(x)> \sqrt\lambda$ one can take $U=B_{r_t(x)/8}(x)$ and
  $V=B_{r_t(x)/4}(x)$ and argue as in the proof of \autoref{Prop_ModelSchauderEstimate}.
\end{proof}

\subsection{Estimate of \texorpdfstring{$\|\rho_{t,\lambda}$\underline{$a$}$\|_{L^{\infty}_{-1,\delta;t,\lambda}}$}{...}}

\begin{prop}\label{Prop_X}
  There is a constant $c>0$ such that for all $t\in(-T',T')$ and
  $\lambda\in(0,\Lambda]$ the following estimate holds
  \begin{equation}
    \|\ua\|_{L^{\infty}_{-1,\delta;t,\lambda}}
    \leq c\(\|L_{t,\lambda}\ua\|_{C^{0,\alpha}_{-2,\delta;t,\lambda}}
          + \|\bar\pi_{t,\lambda}\ua\|_{L^{\infty}_{-1,\delta;t,\lambda}}\).
  \end{equation}
\end{prop}

\begin{proof}
  If not, then there exist sequences $(t_i)$, $(\lambda_i)$ and
  $(\ua_i)$ such that $\lim_{i\to\infty}\lambda_i=0$,
  \begin{align}
    \nonumber
    \|\ua_i\|_{L^\infty_{-1,\delta;t_i,\lambda_i}}&=1, \\
    \nonumber
    \lim_{i\to\infty}\|L_{t_i,\lambda_i}\ua_i\|_{{C^{0,\alpha}_{-2,\delta;t_i,\lambda_i}}}&=0
    \qand \\
    \label{Eq_pi0}
    \lim_{i\to\infty}\|\bar\pi_{t_i,\lambda_i}\ua_i\|_{L^{\infty}_{-1,\delta;t_i,\lambda_i}}&=0.
  \end{align}
  After passing to a subsequence we can assume that $(t_i)$ converges
  to a limit $t$.  From \autoref{Prop_schauder} it follows that
  \begin{equation}\label{Eq_1}
    \|\ua_i\|_{C^{1,\alpha}_{-1,\delta;t_i,\lambda_i}} \leq c.
  \end{equation}
  Pick a sequence $(x_i)$ of points in $Y$ such that
  \begin{equation*}
    w_{-1,\delta;t_i,\lambda_i}(x_i)|\ua_i|(x_i)=1.
  \end{equation*}  
  After passing to a subsequence we can assume that one of the
  following cases occurs.  We will show that each of them leads to a
  contradiction, thus proving the proposition.
 
  \setcounter{case}{0}
  \begin{case}
    The sequence $(x_i)$ accumulates away from $P_t$:
    $\lim_{i\to\infty} r_{t_i}(x_i) > 0$.
  \end{case}
  \noindent
  By \eqref{Eq_1} the sequence $(\ua_i)$ is uniformly bounded in
  $C^{1,\alpha}$ on each compact subset of $Y\setminus P_t$.
  Arzelà--Ascoli and a diagonal sequence argument thus yield a
  subsequence of $(\ua_i)$ which converges to a limit $\ua$ on
  $Y\setminus P_t$ in $C^{1,\alpha/2}_\loc$.  Since we can also
  arrange that the corresponding subsequence of $(x_i)$ converges to a
  limit $x\in Y\setminus P_t$ for which $r_t(x)^{1+\delta}|\ua|(x)=1$,
  it follows that $\ua$ cannot vanish identically.  However, $\ua$
  also satisfies
  \begin{align}
    \label{Eq_2}
    \|r_t^{1+\delta}\ua\|_{L^\infty} &\leq 1 \qand \\
    \label{Eq_3}
    L_{B_t,\phi_t}\ua&=0 \quad\text{on}~Y\setminus P_t.
  \end{align}
  Since $\delta<2$, it follows from \eqref{Eq_2} that $\ua$ satisfies
  \eqref{Eq_3} on all of $Y$ in the sense of distribution and hence is
  smooth by elliptic regularity.  This contradicts the hypothesis that
  $A_0$ and hence $B_t$ is acyclic, i.e., that
  $L_{B_t,\phi_t}$ has trivial kernel.

  \begin{case}
    The sequence $(x_i)$ quickly accumulates near $P_t$:
    $\lim_{i\to\infty} r_{t_i}(x_i)/\lambda_i<\infty$.
  \end{case}
  \noindent
  After passing to a subsequence we can assume that $(x_i)$ converges
  to a point $x\in P_t$.  With the notation of the paragraph preceding
  \autoref{Prop_R7ModelCompare}, set
  \begin{equation*}
    \ub_i:=s_{1,\lambda_i}\(\ua_i|_{V_{\sqrt\lambda_i,\sqrt\lambda_i;t}}\).
  \end{equation*}
  This sequence satisfies
  \begin{equation*}
    \|\ub_i\|_{C^{1,\alpha}_{-1+\delta}\(U_{\sqrt\lambda_i,\sqrt\lambda_i;\lambda_i}\)}\leq
    c \qandq
    \lim_{i\to\infty}\|L_I\ub_i\|_{{C^{0,\alpha}_{-2+\delta}}}=0,
  \end{equation*}
  and if $(y_i)$ denotes the sequence of points in
  $U_{\sqrt\lambda_i,\sqrt\lambda_i;\lambda_i}$ corresponding to the
  sequence $(x_i)$, then
  \begin{equation*}
    w(y_i)^{1-\delta}|\ub_i|(y_i)\geq \frac12.
  \end{equation*} 
  where $w=1+|\pi_{\R^4}|$ as in \eqref{Eq_w}.  Since the sequence of
  subsets $U_{\sqrt\lambda_i,\sqrt\lambda_i;\lambda_i}\subset \R^7$ is
  exhaustive, Arzelà--Ascoli and a diagonal sequence argument yield a
  subsequence of $(\ub_i)$ which converges to a limit $\ub$ on $\R^7$
  in $C^{1,\alpha/2}_\loc$.  By translation we can arrange that the
  $\R^3$--component of $y_i$ is zero and thus $|y_i|$ is bounded.
  After passing to a further subsequence $(y_i)$ converges to a limit
  $y\in\R^7$.  At this point we must have $w(y)^{1-\delta}|\ub|(y)\geq
  1/2$ and thus $\ub$ cannot vanish identically.  It follows from
  \autoref{Prop_R7ModelCompare} that $\ub$ satisfies
  \begin{equation*}
    \|w^{1-\delta}\ub\|_{L^\infty}
    \leq 2 \qandq L_I\ub=0.
  \end{equation*}
  Moreover, using \eqref{Eq_pi0} and arguing as in the proof of
  \autoref{Prop_IotaPi}, making use of the hypothesis $\delta<0$,
  one can show that each restriction of $\ub=0$ to a slice
  $\{x\}\times\R^4$ is $L^2$--orthogonal to $\ker\delta_I$.  This,
  however, contradicts \autoref{Cor_LITranslationInvariantKernel}.

  \begin{case}
    The sequence $(x_i)$ slowly accumulates near $P_t$:
    $\lim_{i\to\infty} r_{t_i}(x_i)/\lambda_i=\infty$.
  \end{case}
  \noindent
  In a similar manner as in the previous case we set
  \begin{equation*}
    \ub_i:=s_{1,\lambda_i}\(\ua_i|_{V_{\sqrt\lambda_i,\sigma;t}}\)
  \end{equation*}
  and denote by $(y_i)$ the sequence of points in
  $U_{\sqrt\lambda_i,\sigma;\lambda_i}$.  Again, we can assume that
  the $\R^3$--component of $y_i$ is zero.  After passing to a
  subsequence we can assume that one of the following two cases
  occurs.
 
  \begin{subcase}
    We have $|y_i|\leq 1/\sqrt\lambda_i$ for all $i\in\N$.
  \end{subcase}

  Set
  \begin{equation*}
    \tilde \ub_i:=|y_i|^{1-\delta}\ub_i(|y_i|\cdot -) \qandq
    \tilde y_i:=y_i/|y_i|.
  \end{equation*}
  Again, Arzelà--Ascoli and a diagonal sequence argument yield a
  subsequence of $(\tilde\ub_i)$ converging to a limit $\tilde\ub$ on
  $\R^7\setminus (\R^3\times\{0\})$ which cannot vanish identically,
  since $|\tilde y|^{1-\delta}|\tilde\ua|(\tilde y)\geq 1/4$ with
  $\tilde y:=\lim_{t\to\infty} \tilde y_i$.  However, $\tilde\ub$ also satisfies
  \begin{align}
    \label{Eq_9}
    \|\tilde w^{1-\delta}\tilde\ub\|_{L^\infty}
   &\leq 4 \qand \\
    \label{Eq_4}
    L\tilde\ub&=0 \quad\text{on}~\R^7\setminus (\R^3\times\{0\}).
  \end{align}
  Here $\tilde w:=|\pi_{\R^4}|$ and $L$ is defined by
  \begin{equation}\label{Eq_L0}
    L\ua:=\(\rd^*a, \rd \xi+*(\psi\wedge\rd a)\).
  \end{equation}
  Since $\delta>-2$, it follows from \eqref{Eq_9} that $\tilde\ub$
  solves \eqref{Eq_4} on all of $\R^7$ in the sense of distributions
  and hence is smooth by elliptic regularity.  Moreover, using 
  standard elliptic estimates one can show that $\tilde\ub$ is
  uniformly bounded near $\R^3$ and therefore by \eqref{Eq_9} on all
  of $\R^7$ since $\delta\leq 1$.  Because
  $L^*L=\Delta_{\R^3}+\Delta_{\R^4}$, we can now apply \autoref{Lem_Liouville} to conclude that $\tilde\ub$ is invariant under
  translations in the $\R^4$--direction.  We can thus think of
  $\tilde\ub$ as a vector of harmonic functions on $\R^4$.  Since
  $\delta<1$ it follows that the components of $\tilde\ub$ decay to
  zero at infinity and thus vanish by the maximum principle.  This,
  however, contradicts the fact that $\tilde\ub$ cannot vanish
  identically.

  \begin{subcase}
    We have $|y_i|>1/\sqrt\lambda_i$ for all $i\in\N$.
  \end{subcase}

  If we set
  \begin{equation*}
    \tilde \ub_i:=\lambda_i^\delta|y_i|^{1+\delta}\ub_i(|y_i|\cdot -) \qandq \tilde
    y_i:=y_i/|y_i|,
  \end{equation*}
  then we obtain the desired contradiction by arguing, mutatis
  mutandis, as the previous case.  The relevant constraint on $\delta$
  is easily seen to be that $\delta\in(-1,2)$.
\end{proof}

\subsection{Comparison with \texorpdfstring{$F_{\fI_t}$}{F}}

The connection on $\coprod_{t\in(-T,T)} NP_t$ induces connections on
the bundles over $(-T,T)$ whose fibres are
$C^{0,\alpha}(\fI_{t,\lambda}^*V\fM_t)$ and $C^{1,\alpha}(\fI_{t,\lambda}^*V\fM_t)$,
respectively.  We denote the corresponding covariant derivatives by
$\nabla_t$.

\begin{prop}\label{Prop_cf}
  There is a constant $c>0$ such that for all $t\in(-T',T')$ and
  $\lambda\in(0,\Lambda]$ the following estimate holds
  \begin{align*}
    \|L_{t,\lambda}\iota_{t,\lambda}\hat\fI
      -\iota_{t,\lambda}F_{\fI_t}\hat\fI\|_{C^{0,\alpha}_{-2,0;t,\lambda}}
   &\leq c\lambda^2 \|\hat\fI\|_{C^{1,\alpha}} \qand \\
     \|(\nabla_t L_{t,\lambda})\iota_{t,\lambda}\hat\fI
      -\iota_{t,\lambda} (\nabla_tF_{\fI_t})\hat\fI\|_{C^{0,\alpha}_{-2,0;t,\lambda}}
   &\leq c\lambda^2 \|\hat\fI\|_{C^{1,\alpha}}.  
  \end{align*}
\end{prop}

\begin{proof}
  Consider the operator
  \begin{equation*}
    \tilde L_{t}\co
    \Omega^0(NP_t,\fg_{E_t})\oplus\Omega^1(NP_t,\fg_{E_t}) \to
    \Omega^0(NP_t,\fg_{E_t})\oplus\Omega^1(NP_t,\fg_{E_t})
  \end{equation*}
  defined by
  \begin{equation*}
    \tilde L_{t,\lambda} \ua :=(\rd_{I_{t,\lambda}}^*a,
    \rd_{I_{t,\lambda}}\xi+*_0(\psi_{0;t} \wedge\rd_{I_{t,\lambda}} a)).
  \end{equation*}
  If we identify $\hat\fI\in\Gamma(\fI_t^*V\fM_t)$ with an element of
  $\Omega^1(NP_t,\fg_{E_t})$, then since
  $\delta_{\fI_t(x)}(\hat\fI|_{N_xP_t})=0$ we have
  \begin{equation*}
    \tilde L_{t,\lambda}\hat\fI
    =\(0,*_0\left[\psi_{0;t}\wedge(\rd_{I_{t,\lambda}}\hat\fI)^{1,1}\right]\)
  \end{equation*}
  which is the same as $s_{1/\lambda}^*\circ F_{\fI_t}\circ
  s_{\lambda}^*(\hat\fI)$.

  In order to prove the first estimate it thus suffices to control the
  following terms
  \begin{align*}
    L_{t,\lambda}\iota_{t,\lambda}\hat\fI
    -\iota_{t,\lambda}F_{\fI_t}\hat\fI
   &=L_{t,\lambda} (\iota_{t,\lambda} \hat\fI -
    \hat\fI_\lambda) + (L_{t,\lambda}-\tilde
    L_{t,\lambda}) \hat\fI_\lambda \\
    &\quad +(s_{1/\lambda}^* \circ F_{\fI_t}\hat\fI-\iota_{t,\lambda}F_{\fI_t}\hat\fI) \\
   &=:\rI+\rII+\rIII
  \end{align*} 
  on $V_{[0,\sigma);t}$.  It is easy to see that
  \begin{equation*}
    \|\rI\|_{C^{0,\alpha}_{-2,0;t,\lambda}(V_{[0,\sigma);t})} +
  \|\rIII\|_{C^{0,\alpha}_{-2,0;t,\lambda}(V_{[0,\sigma);t})} \leq
  c\lambda^2\|\hat\fI\|_{C^{1,\alpha}}
  \end{equation*}
  by using that fact that $\rI$ and $\rIII$ are supported in
  $V_{[\sigma/2,\sigma);t}$ and the estimates
  \begin{equation*}
    \|L_{t,\lambda} \ua\|_{C^{0,\alpha}_{-2,0;t,\lambda}(V_{[0,\sigma);t})}\leq c
    \|\ua\|_{C^{1,\alpha}_{-1,0;t,\lambda}(V_{[0,\sigma);t})} \quad\text{and}\quad
    \|F_{\fI_t}\hat\fI\|_{C^{0,\alpha}}\leq c \|\hat\fI\|_{C^{1,\alpha}}.
  \end{equation*}
  as well as
  \begin{align*}
    &\|\iota_{t,\lambda}\hat\fI
      -\hat\fI_\lambda\|_{C^{k,\alpha}_{-\ell,0;t,\lambda}(V_{[\sigma/2,\sigma);t})} \\
   &\quad\leq \|\chi_t^+-1\|_{{C^{k,\alpha}_{\ell+3,0;t,\lambda}}(V_{[\sigma/2,\sigma);t})}
         \cdot\|\hat \fI_\lambda\|_{C^{k,\alpha}_{-3,0;t,\lambda}(V_{[\sigma/2,\sigma);t})} \\
   &\quad\leq c\lambda^2 \|\hat\fI\|_{C^{k,\alpha}}.
  \end{align*}
  To estimate $\rII$ we expand it as
  \begin{align*}
    \rII&=*\(\psi_t\wedge(A_{t,\lambda}-I_{t,\lambda})\wedge\hat\fI_\lambda\)
    +*\((\psi_{1;t}+\psi_{\geq 2;t})\wedge\rd_{I_{t,\lambda}}\hat\fI_\lambda\) \\
    &\qquad+(*-*_0)(\psi_{0;t}\wedge\rd_{I_{t,\lambda}}\hat\fI_\lambda) \\
    &=:\rII_1+\rII_2+\rII_3.
  \end{align*}
  It follows from \autoref{Prop_iae} that
  \begin{equation}\label{Eq_aie}
    \|A_{t,\lambda}-I_{t,\lambda}\|_{C^{0,\alpha}_{1,0;t,\lambda}(V_{[0,\sigma);t})}
    =\|\chi_{t,\lambda}^-b_t
       +(\chi_t^+-1)i_{t,\lambda}\|_{C^{0,\alpha}_{1,0;t,\lambda}(V_{[0,\sigma);t})}
    \leq c
  \end{equation}
  which in conjunction with \eqref{Eq_hj} yields
  \begin{equation*}
    \|\rII_1\|_{C^{0,\alpha}_{-2,0;t,\lambda}} \leq
  c\lambda^2\|\hat\fI\|_{C^{1,\alpha}}.
  \end{equation*}
  From \autoref{Prop_Decay} and simple scaling considerations
  it follows that
  \begin{equation*}
    \|(\rd_{I_{t,\lambda}}\hat\fI_\lambda)^{1,1}\|_{C^{0,\alpha}_{-3,0;t,\lambda}(V_{[0,\sigma);t})}
    +
    \|(\rd_{I_{t,\lambda}}\hat\fI_\lambda)^{0,2}\|_{C^{0,\alpha}_{-4,0;t,\lambda}(V_{[0,\sigma);t})}
    \leq c \lambda^2 \|\hat\fI\|_{C^{1,\alpha}}.
  \end{equation*}
  Since $\delta_{\fI_t(x)}(\hat\fI|_{N_xP_t})=0$, we have
  \begin{equation*}
    \psi_{0;t}\wedge(\rd_{I_{t,\lambda}}\hat\fI_\lambda)^{0,2}=
    \psi_{1;t}\wedge(\rd_{I_{t,\lambda}}\hat\fI_\lambda)^{0,2}=0.
  \end{equation*}
  These facts together with \autoref{Prop_PsiExpansion} imply that
  \begin{equation*}
    \|\rII_2\|_{C^{0,\alpha}_{-2,0;t,\lambda}} +
    \|\rII_3\|_{C^{0,\alpha}_{-2,0;t,\lambda}} \leq
    c\lambda^2\|\hat\fI\|_{C^{1,\alpha}}.
  \end{equation*}
  This finishes the proof of the first estimate.
  To prove the second estimate note that the individual terms of
  \begin{align*}
    (\nabla_tL_{t,\lambda}\iota_{t,\lambda})\hat\fI
    -\nabla_tF_{\fI_t}\hat\fI &=(\nabla_tL_{t,\lambda})
    (\iota_{t,\lambda} \hat\fI -
    \hat\fI_\lambda) + \nabla_t(L_{t,\lambda}-\tilde L_{t,\lambda})\hat\fI_\lambda \\
    &\qquad +(s_{1/\lambda}^*\circ\nabla_t
    F_{\fI_t}\hat\fI-\iota_{t,\lambda}\nabla_tF_{\fI_t}\hat\fI)
  \end{align*}
  can be estimated just as above.
\end{proof}

\begin{prop}\label{Prop_cf2}
  There is a constant $c>0$ such that for all $t\in(-T',T')$ and
  $\lambda\in(0,\Lambda]$ we have
  \begin{equation*}
    \|\hat \fI\|_{C^{1,\alpha}} \leq c
    \(\|\pi_{t,\lambda}L_{t,\lambda}\iota_{t,\lambda}\hat\fI\|_{C^{0,\alpha}}
    + \left|\langle\hat\fI,\hat v\circ \fI_t\rangle\right|\).
  \end{equation*}
\end{prop}

\begin{proof}
  By hypothesis we have
  \begin{equation*}
    \|\hat \fI\|_{C^{1,\alpha}} \leq c
    \(\|F_{\fI_t}\hat\fI\|_{C^{0,\alpha}}
    + \left|\langle\hat\fI,\hat v\circ \fI_t\rangle\right|\)
  \end{equation*}
  for $t=0$ and thus also for $t\in(-T',T')$.  Together with
   \begin{equation*}
     \|F_{\fI_t}\hat\fI\|_{C^{0,\alpha}} \leq
     c\(\|\pi_{t,\lambda}L_{t,\lambda}\iota_{t,\lambda}\hat\fI\|_{C^{0,\alpha}}
     +\lambda^{1-\alpha}\|\hat\fI\|_{C^{1,\alpha}}\),
   \end{equation*}  
   which is an immediate corollary of \autoref{Prop_IotaPi} and \autoref{Prop_cf}, this immediately implies the asserted estimate by rearranging.
\end{proof}

\subsection{Cross-term estimates}

\begin{prop}\label{Prop_cte}
  There is a constant $c>0$ such that for all $t\in(-T',T')$ and
  $\lambda\in(0,\Lambda]$ we have
  \begin{align*}
    \|\rho_{t,\lambda}L_{t,\lambda}\iota_{t,\lambda}\hat\fI\|_{C^{0,\alpha}_{-2,0;t,\lambda}}
    &\leq c\lambda^{2-\alpha} \|\hat\fI\|_{C^{1,\alpha}} \qand \\
    \|\rho_{t,\lambda}(\nabla_t
    L_{t,\lambda})\iota_{t,\lambda}\hat\fI\|_{C^{0,\alpha}_{-2,0;t,\lambda}}
    &\leq c\lambda^{2-\alpha} \|\hat\fI\|_{C^{1,\alpha}}
    \intertext{as well as}
    \|\pi_{t,\lambda}L_{t,\lambda}\rho_{t,\lambda}\ua\|_{C^{0,\alpha}}
    &\leq
    c\lambda^{-\alpha}\|\rho_{t,\lambda}\ua\|_{C^{1,\alpha}_{-1,0;t,\lambda}}
    \qand \\
     \|\pi_{t,\lambda}(\nabla_tL_{t,\lambda})\rho_{t,\lambda}\ua\|_{C^{0,\alpha}}
    &\leq c\lambda^{-\alpha} \|\rho_{t,\lambda}\ua\|_{C^{1,\alpha}_{-1,0;t,\lambda}}.
  \end{align*}
\end{prop}

\begin{proof}
  The first two estimates are immediate consequences of
  \autoref{Prop_IotaPi} and \autoref{Prop_cf} because
  \begin{equation*}
    \rho_{t,\lambda}L_{t,\lambda}\iota_{t,\lambda}\hat\fI
    =\rho_{t,\lambda}(L_{t,\lambda}\iota_{t,\lambda}\hat\fI
    -\iota_{t,\lambda}F_{\fI_t}\hat\fI)
  \end{equation*}
  and similarly for $\nabla_tL_{t,\lambda}$.

  To prove the last two estimates first note that we can assume
  without loss of generality that $\ua$ is supported in
  $V_{[0,\sigma)}$ and that $\ua=\rho_{t,\lambda}\ua$.  Define
  $\tilde\pi_{t,\lambda}:\Omega^1(NP_t,\fg_{E_t}) \to
  \Gamma(\fI_t^*V\fM_t)$ by
  \begin{equation*}
    (\tilde\pi_{t,\lambda} a)(x):=\sum_\kappa \int_{N_xP_t} \<a,s_{1/\lambda}^*\kappa\>
    s_{1/\lambda}^*\kappa
  \end{equation*}
  where, at each point $x\in P_t$, $\kappa$ runs through an
  orthonormal basis of $(V\fM_t)_{\fI_{t}(x)}$ with respect to
  $\<s_{1/\lambda}^*\cdot,s_{1/\lambda}^*\cdot\>$ and set
  $\tilde\rho_{t,\lambda}:=\id-\tilde\pi_{t,\lambda}$.  One can check
  $\tilde\pi_{t,\lambda}a=0$ implies that $\tilde\pi_{t,\lambda}\tilde
  L_{t,\lambda} a =0$ where $\tilde L_{t,\lambda}$ is as defined in
  the proof of \autoref{Prop_cf}.  Therefore
  \begin{align*}
    \pi_{t,\lambda}L_{t,\lambda}\rho_{t,\lambda}\ua
    &=\pi_{t,\lambda} (L_{t,\lambda}-\tilde L_{t,\lambda})\rho_{t,\lambda} \ua
    + (\pi_{t,\lambda}-\tilde\pi_{t,\lambda}) \tilde L_{t,\lambda}\rho_{t,\lambda} \ua \\
    &\quad + \tilde\pi_{t,\lambda} \tilde L_{t,\lambda}(\rho_{t,\lambda}-\tilde\rho_{t,\lambda}) \ua \\
    &=:\pi_{t,\lambda}\rI + \rII + \rIII.
  \end{align*}
  Define $\tilde{\tilde{\pi}}_{t,\lambda}$ like
  $\tilde\pi_{t,\lambda}$ but take the inner product with
  $\iota_{t,\lambda}\kappa$ instead of $s_{1/\lambda}^*\kappa$ and let
  $\kappa$ run through an orthonormal basis with respect to
  $\<\iota_{t,\lambda}\cdot,\iota_{t,\lambda}\cdot\>_{L^2}$.  If
  $\tilde \rII$ and $\tilde \rIII$ denote the same expressions as
  $\rII$ and $\rIII$ but with $\tilde{\tilde{\pi}}_{t,\lambda}$ in
  place of $\tilde\pi_{t,\lambda}$ and
  $\id-\tilde{\tilde{\pi}}_{t,\lambda}$ in place of
  $\tilde\rho_{t,\lambda}$, then $\tilde\rII$ and $\tilde\rIII$ are
  supported in $V_{[\sigma/2,\sigma)}$ and one can argue as in the
  proof of \autoref{Prop_cf} to show
  \begin{equation*}
    \|\tilde\rII\|_{C^{0,\alpha}} + \|\tilde\rIII\|_{C^{0,\alpha}}
    \leq c\lambda^{-\alpha}\|\ua\|_{C^{0,\alpha}_{-1,\delta;t,\lambda}}.
  \end{equation*}
  The eigenvalues of the quadratic form
  $\<\iota_{t,\lambda}\cdot,\iota_{t,\lambda}\cdot\>$ with respect to
  $\<s_{1/\lambda}^*\cdot,s_{1/\lambda}^*\cdot\>$ differ from one by
  $O(\lambda^4)$; hence, the differences between $\rII$ and
  $\tilde\rII$ as well as between $\rIII$ and $\tilde\rIII$ are
  negligibly small.

  To estimate $\rI$ we write it as
  \begin{align*}
    \rI&=*\(\psi_t\wedge(A_{t,\lambda}-I_{t,\lambda})\wedge\rho_{t,\lambda}\ua\)
       +*\((\psi_{1;t}+\psi_{\geq 2;t})\wedge\rd_{I_{t,\lambda}}\rho_{t,\lambda}\ua\) \\
       &\quad+(*-*_0)\(\psi_0\wedge\rd_{I_{t,\lambda}}\rho_{t,\lambda}\ua\).
  \end{align*}
  Using \autoref{Prop_IotaPi}, \autoref{Prop_PsiExpansion} and \autoref{Prop_*Expansion}
  as well as and \eqref{Eq_aie} it follows that
  \begin{equation*}
    \|\pi_{t,\lambda}\rI\|_{C^{0,\alpha}}
    \leq c\lambda^{-\alpha}\|\rI\|_{C^{0,\alpha}_{-1,0;t,\lambda}(V_{[0,\sigma);t})}
    \leq c\lambda^{-\alpha}\|\ua\|_{C^{1,\alpha}_{-1,\delta;t,\lambda}}.
  \end{equation*}
  This finishes the proof of the third estimate.  The last estimate is
  proved along the same lines.
\end{proof}

\subsection{Proof of \autoref{Prop_linear_estimate}}

Applying \autoref{Prop_schauder} and \autoref{Prop_X} to
$\rho_{t,\lambda}\ua$ and using \autoref{Prop_IotaPi} yields
\begin{align*}
  \|\rho_{t,\lambda}\ua\|_{C^{1,\alpha}_{-1,\delta;t,\lambda}}
  &\leq c
  \|L_{t,\lambda}\rho_{t,\lambda}\ua\|_{C^{0,\alpha}_{-2,\delta;t,\lambda}} \\
  &\leq c\big(
 \|\rho_{t,\lambda}L_{t,\lambda}\ua\|_{C^{0,\alpha}_{-2,\delta;t,\lambda}}
    +
    \|\rho_{t,\lambda}L_{t,\lambda}\bar\pi_{t,\lambda}\ua\|_{C^{0,\alpha}_{-2,\delta;t,\lambda}} \\
 &\qquad\qquad +\lambda^{1-\alpha}\|\pi_{t,\lambda}L_{t,\lambda}\rho_{t,\lambda}\ua\|_{C^{0,\alpha}}
 \big).
\end{align*}
By \autoref{Prop_cf2}
\begin{equation*}
  \|\pi_{t,\lambda}\ua\|_{C^{1,\alpha}} \leq
  c\(\|\pi_{t,\lambda}L_{t,\lambda}\ua\|_{C^{0,\alpha}}     
  + \left|\<\pi_{t,\lambda}\ua,\hat v\circ \fI_{t}\>\right|
  +\|\pi_{t,\lambda}L_{t,\lambda}\rho_{t,\lambda}\ua\|_{C^{0,\alpha}}\).
\end{equation*}
Recalling the definitions of $\|\cdot\|_{\fX_{t,\lambda}}$,
$\|\cdot\|_{\fY_{t,\lambda}}$ and $\bL_{t,\lambda}$, and using
\autoref{Prop_cte} it follows that
\begin{equation*}
  \|(\ua,\eta)\|_{\fX_{t,\lambda}}
  \leq c\(\|\bL_{t,\lambda}(\ua,\eta)\|_{\fY_{t,\lambda}}+\lambda^{1-\alpha}
  \|(\ua,\eta)\|_{\fX_{t,\lambda}}\)
\end{equation*}
which yields \eqref{Eq_l0} by rearranging.
\qed

\subsection{Proof of \autoref{Prop_dle}}

It is clear that $L_{t,\lambda}$ depends continuously differentiably
on $t$ and continuously on $\lambda$.  By \autoref{Prop_cf} and \autoref{Prop_cte} as well as \autoref{Cor_DeltaSensitivity} and
\autoref{Prop_IotaPi} we have
\begin{align*}
  \|\rho_{t,\lambda}L_{t,\lambda}\ua\|_{C^{0,\alpha}_{-2,\delta;t,\lambda}}
 &\leq
  c\(\|L_{t,\lambda}\rho_{t,\lambda}\ua\|_{C^{0,\alpha}_{-2,\delta;t,\lambda}}
  +\lambda^{1-\alpha}\|\pi_{t,\lambda}L_{t,\lambda}\rho_{t,\lambda}\ua\|_{C^{0,\alpha}}\right. \\
 &\qquad\qquad\left.
  +\|\rho_{t,\lambda}L_{t,\lambda}\bar\pi_{t,\lambda}\ua\|_{C^{0,\alpha}_{-2,\delta;t,\lambda}}\)
 \\
 &\leq c\(\|\rho_{t,\lambda}\ua\|_{C^{1,\alpha}_{-1,\delta;t,\lambda}}
         +\lambda^{2+\delta/2-\alpha}\|\pi_{t,\lambda}\ua\|_{C^{1,\alpha}}\)
\intertext{and}
  \|\pi_{t,\lambda}L_{t,\lambda}\ua\|_{C^{0,\alpha}}
 &\leq \|F_{\fI_t}\pi_{t,\lambda}\ua\|_{C^{0,\alpha}}
   +\|\pi_{t,\lambda}L_{t,\lambda}\rho_{t,\lambda}\ua\|_{C^{0,\alpha}} \\
  &\quad
   +c\lambda^{1-\alpha} \|\pi_{t,\lambda}\ua\|_{C^{0,\alpha}}
  \\
 &\leq c\(\|\pi_{t,\lambda}\ua\|_{C^{1,\alpha}}
         +\lambda^{-\alpha}\|\rho_{t,\lambda}\ua\|_{C^{1,\alpha}_{-1,\delta;t,\lambda}}\).
\end{align*}
This yields $\|\bL_{t,\lambda}\ua\|_{\fY_{t,\lambda}}\leq
c\|\ua\|_{\fX_{t,\lambda}}$.  In a similar way one shows that
\begin{equation*}
  \|\nabla_t\bL_{t,\lambda}\ua\|_{\fY_{t,\lambda}}\leq c\|\ua\|_{\fX_{t,\lambda}}.
\end{equation*}
This completes the proof.
\qed


\section{Quadratic estimate}
\label{Sec_QuadraticEstimate}

By a slight abuse of notation we denote by $Q_{t,\lambda}$ the
quadratic form defined in \eqref{Eq_QuadraticTerm} as well as the associated
bilinear form.

\begin{prop}\label{Prop_q}
  The bilinear form $Q_{t,\lambda}$ depends continuously
  differentiably on $t$ and continuously on $\lambda$ and there exists
  a constant $c>0$ such that for $t\in(-T',T')$ and
  $\lambda\in(0,\Lambda]$ we have
  \begin{align*}
    &\|\rho_{t,\lambda}Q_{t,\lambda}(\ua_1,\ua_2)\|_{C^{0,\alpha}_{-2,\delta;t,\lambda}} \\
    &\quad\leq c\lambda^{-\alpha}\(
    \|\rho_{t,\lambda}\ua_1\|_{C^{0,\alpha}_{-1,\delta;t,\lambda}}
    \cdot\|\rho_{t,\lambda}\ua_2\|_{C^{0,\alpha}_{-1,\delta;t,\lambda}}
    +\|\rho_{t,\lambda}\ua_1\|_{C^{0,\alpha}_{-1,\delta;t,\lambda}}
    \cdot\|\pi_{t,\lambda}\ua_2\|_{C^{0,\alpha}} \right. \\
    &\quad\qquad\qquad\left.+\|\pi_{t,\lambda}\ua_1\|_{C^{0,\alpha}}
    \cdot\|\rho_{t,\lambda}\ua_2\|_{C^{0,\alpha}_{-1,\delta;t,\lambda}}
    +\|\pi_{t,\lambda}\ua_1\|_{C^{0,\alpha}}
    \cdot \|\pi_{t,\lambda}\ua_2\|_{C^{0,\alpha}}\) \intertext{and}
    &\|\rho_{t,\lambda}\nabla_tQ_{t,\lambda}(\ua_1,\ua_2)\|_{C^{0,\alpha}_{-2,\delta;t,\lambda}} \\
    &\quad\leq c\lambda^{-\alpha}\(
    \|\rho_{t,\lambda}\ua_1\|_{C^{0,\alpha}_{-1,\delta;t,\lambda}}
    \cdot\|\rho_{t,\lambda}\ua_2\|_{C^{0,\alpha}_{-1,\delta;t,\lambda}}
    +\|\rho_{t,\lambda}\ua_1\|_{C^{0,\alpha}_{-1,\delta;t,\lambda}}
    \cdot\|\pi_{t,\lambda}\ua_2\|_{C^{0,\alpha}} \right. \\
    &\quad\qquad\qquad\left.+\|\pi_{t,\lambda}\ua_1\|_{C^{0,\alpha}}
    \cdot\|\rho_{t,\lambda}\ua_2\|_{C^{0,\alpha}_{-1,\delta;t,\lambda}}
    +\|\pi_{t,\lambda}\ua_1\|_{C^{0,\alpha}}
    \cdot \|\pi_{t,\lambda}\ua_2\|_{C^{0,\alpha}}\)
    \\ 
    \intertext{as well as}
    &\lambda\|\pi_{t,\lambda}Q_{t,\lambda}(\ua_1,\ua_2)\|_{C^{0,\alpha}} \\
    &\quad\leq c\lambda^{-\alpha}\(
    \|\rho_{t,\lambda}\ua_1\|_{C^{0,\alpha}_{-1,\delta;t,\lambda}}
    \cdot\|\rho_{t,\lambda}\ua_2\|_{C^{0,\alpha}_{-1,\delta;t,\lambda}}
    +\|\rho_{t,\lambda}\ua_1\|_{C^{0,\alpha}_{-1,\delta;t,\lambda}}
    \cdot\|\pi_{t,\lambda}\ua_2\|_{C^{0,\alpha}} \right. \\
    &\quad\qquad\qquad\left.+\|\pi_{t,\lambda}\ua_1\|_{C^{0,\alpha}}
    \cdot\|\rho_{t,\lambda}\ua_2\|_{C^{0,\alpha}_{-1,\delta;t,\lambda}}
    +\lambda \|\pi_{t,\lambda}\ua_1\|_{C^{0,\alpha}}
    \cdot \|\pi_{t,\lambda}\ua_2\|_{C^{0,\alpha}}\) 
    \intertext{and}
    &\lambda\|\pi_{t,\lambda}\nabla_tQ_{t,\lambda}(\ua_1,\ua_2)\|_{C^{0,\alpha}} \\
    &\quad\leq c\lambda^{-\alpha}\(
    \|\rho_{t,\lambda}\ua_1\|_{C^{0,\alpha}_{-1,\delta;t,\lambda}}
    \cdot \|\rho_{t,\lambda}\ua_2\|_{C^{0,\alpha}_{-1,\delta;t,\lambda}}
    +
    \|\rho_{t,\lambda}\ua_1\|_{C^{0,\alpha}_{-1,\delta;t,\lambda}}
    \cdot \|\pi_{t,\lambda}\ua_2\|_{C^{0,\alpha}} \right. \\
    &\quad\qquad\qquad\left.+\|\pi_{t,\lambda}\ua_1\|_{C^{0,\alpha}}
    \cdot\|\rho_{t,\lambda}\ua_2\|_{C^{0,\alpha}_{-1,\delta;t,\lambda}}
    +\lambda \|\pi_{t,\lambda}\ua_1\|_{C^{0,\alpha}}
    \cdot \|\pi_{t,\lambda}\ua_2\|_{C^{0,\alpha}}\).
  \end{align*}
\end{prop}

\begin{proof}
  The first two estimates are immediate consequences of  \autoref{Prop_Multiplication} and \autoref{Prop_IotaPi}.  For the last two estimates we only
  have to explain why we get a factor $\lambda$ (instead of one) in front of $\|\pi_{t,\lambda}\ua_1\|_{C^{0,\alpha}}
  \cdot \|\pi_{t,\lambda}\ua_2\|_{C^{0,\alpha}}$.  Note that
  \begin{equation*}
    \left[*_0\(\iota_{t,\lambda}\hat\fI_1\wedge\iota_{t,\lambda}\hat\fI_2\wedge\psi_{0;t}\)\right]^{0,1}=0
  \end{equation*}
  on grounds of simple bi-degree considerations.  Therefore, using
  \autoref{Prop_IotaPi}, \autoref{Prop_PsiExpansion} and \autoref{Prop_*Expansion},
  \begin{align*}
   &\|\pi_{t,\lambda}Q_{t,\lambda}(\iota_{t,\lambda}\hat\fI_1,\iota_{t,\lambda}\hat\fI_2)\|_{C^{0,\alpha}}  \\
&\quad\leq
 c\lambda^{-\alpha}\left(\|(*-*_0)(\iota_{t,\lambda}\hat\fI_1\wedge\iota_{t,\lambda}\hat\fI_2
   \wedge\psi)\|_{C^{0,\alpha}_{-1,\delta;t,\lambda}}\right. \\
&\quad\qquad\qquad+ \left.
 \|*_0[\iota_{t,\lambda}\hat\fI_1\wedge\iota_{t,\lambda}\hat\fI_2
   \wedge(\psi_{1;t}+\psi_{\geq
     2;t})]\|_{C^{0,\alpha}_{-1,\delta;t,\lambda}}\right) \\
&\quad\leq c\lambda^{-\alpha}\|\hat\fI_1\|_{C^{0,\alpha}}\cdot \|\hat\fI_2\|_{C^{0,\alpha}}.
\qedhere
  \end{align*}
\end{proof}


\section{Conclusion of the proof of \autoref{Thm_A}}
\label{Sec_Conclusion}

\begin{prop}\label{Prop_k}
  There is a constant $c>0$ and for $t\in(-T',T')$ and $\lambda \in
  (0,\Lambda]$ there are
  $\ua(t,\lambda)\in C^{1,\alpha}\(Y,(\Lambda^0\oplus\Lambda^1)\otimes\fg_{E_{t,\lambda}}\)$ and
  $\eta(t,\lambda) \in \R$ depending continuously differentiably on
  $t$ and continuously on $\lambda$ such that the connection $\tilde
  A_{t,\lambda}:=A_{t,\lambda} + a(t,\lambda)$ satisfies
  \begin{equation}\label{Eq_y}
    *\(F_{\tilde A_{t,\lambda}}\wedge\psi_t\) + \rd_{\tilde
      A_{t,\lambda}}\xi(t,\lambda) + \(\mu(t)+\eta(t,\lambda)\)\cdot\iota_{t,\lambda}\hat v\circ\fI_t = 0
  \end{equation}
  and
  \begin{gather*}
    \|\ua(t,\lambda)\|_{\fX_{t,\lambda}}\leq c
    \lambda^{2-\alpha} \quad\text{and}\quad
    |\eta(t,\lambda)| + |\del_t\eta(t,\lambda)| \leq c \lambda^{1-\alpha}.
  \end{gather*}
\end{prop}

The proof relies on the preceding analysis and the following simple
consequence of Banach's fixed point theorem,
cf.~\cite[Lemma~7.2.23]{Donaldson1990}.
\begin{lemma}\label{Lem_CM}
  Let $X$ be a Banach space and let $T \co X\to X$ be a smooth map
  with $T(0)=0$.  Suppose there is a constant $c>0$ such that
    \begin{equation*}
      \|Tx-Ty\|\leq c\(\|x\|+\|y\|\)\|x-y\|.
    \end{equation*}
    Then if $y\in X$ satisfies $\|y\|\leq \frac{1}{10c}$, there exists
    a unique $x\in X$ with $\|x\|\leq \frac{1}{5c}$ solving
    \begin{equation*}
      x+Tx=y.
    \end{equation*}
    The unique solution satisfies $\|x\|\leq 2\|y\|$.  Moreover, if
    $T$ and $y$ depend continuously or continuously differentiably on
    a parameter in an open subset of $\R^n$, then so does the solution
    $x$.
  \end{lemma}

\begin{proof}[Proof of \autoref{Prop_k}]
  We solve \eqref{Eq_y} with the additional
  constraints
  \begin{equation*}
    \rd_{A_{t,\lambda}}^*a=0 \qandq
    \<\pi_{t,\lambda}\ua,\hat v\circ \fI_t\>=0.
  \end{equation*}
  This can be written as
  \begin{equation*}
    \bL_{t,\lambda} (\ua,\eta) + Q_{t,\lambda}(\ua) +
    e_{t,\lambda} = 0.
  \end{equation*}
  With $(\ua,\eta)=\bL_{t,\lambda}^{-1}(\ub,\zeta)$ this becomes
  \begin{equation}\label{Eq_fpe}
    (\ub,\zeta) + \tilde Q_{t,\lambda}(\ub,\zeta) +
    e_{t,\lambda} = 0.
  \end{equation}
  where $\tilde Q_{t,\lambda}:=Q_{t,\lambda}\circ
  \bL_{t,\lambda}^{-1}$.  It follows from
  \autoref{Prop_inverse} and \autoref{Prop_q} that
  \begin{align*}
    &\|\tilde Q_{t,\lambda}(\ub_1,\zeta_1)-\tilde
    Q_{t,\lambda}(\ub_2,\zeta_2)\|_{\fY_{t,\lambda}} \\
    &\qquad\leq c \lambda^{-2-\delta/2-\alpha}
    \(\|(\ub_1,\zeta_1)\|_{\fY_{t,\lambda}}
    +\|(\ub_2,\zeta_2)\|_{\fY_{t,\lambda}}\)
    \|(\ub_1,\zeta_1)-(\ub_2,\zeta_2)\|_{\fY_{t,\lambda}}.
  \end{align*}
  and we recall from \autoref{Prop_PregluingEstimate} that
  \begin{equation*}
    \|e_{t,\lambda}\|_{\fY_{t,\lambda}}\leq c\lambda^{2-\alpha}.
  \end{equation*}
  Hence, we can solve \eqref{Eq_fpe} using \autoref{Lem_CM} since
  $\delta \in(-1,0)$ and $0<\alpha\ll |\delta|$.  The solution
  satisfies $\|(\ub,\zeta)\|_{\fY_{t,\lambda}}\leq
  c\lambda^{2-\alpha}$ and $(\nabla_t\ub,\del_t\zeta)$ solves the
  equation
  \begin{equation}
    (\nabla_t\ub,\del_t\zeta) +
    2\tilde Q_{t,\lambda}((\ub,\zeta),(\nabla_t\ub,\del_t\zeta))+
     (\nabla_t \tilde Q_{t,\lambda})(\ub,\zeta) + \nabla_t e_{t,\lambda}=0.
  \end{equation}
  Since $\|2\tilde Q_{t,\lambda}(\ub,\cdot)\|_{\fY_{t,\lambda}}\leq
  \frac12\|\cdot\|_{\fY_{t,\lambda}}$ and $\|(\nabla_t \tilde
  Q_{t,\lambda})(\ub,\zeta) + \nabla_t
  e_{t,\lambda}\|_{\fY_{t,\lambda}}\leq c\lambda^{2-\alpha}$, it follows that
  $\|(\nabla_t\ub,\del_t\zeta)\|_{\fY_{t,\lambda}}\leq c\lambda^{2-\alpha}$.  This
  implies the desired estimates on
  $(\ua,\eta)=\bL_{t,\lambda}^{-1}(\ub,\zeta)$ and its derivative by
  \autoref{Prop_inverse}.
\end{proof}

The problem of finding $\bar A_\lambda$ is now reduced to constructing a continuous function
$t\co[0,\Lambda]\to(-T',T')$ such that $t(0)=0$ and
\begin{equation*}
  \mu(t(\lambda))+\eta(t(\lambda),\lambda) = 0
\end{equation*}
for $t\in(0,\Lambda]$.
Since $\mu(0)=0$ and $\del_t \mu(0)\neq 0$, we can invert $\mu$ locally around $t = 0$ and rewrite this equation as
\begin{equation}
  \label{Eq_ResidualTransformed}
  \tilde\mu(\lambda)+\eta(\mu^{-1}\circ\tilde\mu(\lambda),\lambda) = 0
\end{equation}
with $\tilde\mu = \mu \circ t$.
Because $|\eta|+|\del_t\eta|\leq c\lambda^{1-\alpha}$, this equation on the other hand can immediately be solved for $\tilde\mu$ and thus $t = \mu^{-1}\circ\tilde \mu$ by appealing to \autoref{Lem_CM}.

\begin{remark}
  If we assume the situation of \autoref{Rmk_WeaklyUnobstructed}, that is, $\mu$ is just monotone (but possibly $\del_t \mu(0) = 0$), then one can still find a continuous inverse $\mu^{-1}$ find solutions of \eqref{Eq_ResidualTransformed} using Brouwer's fixed point theorem.
  However, these solutions might not be described by the graph of a function;
  e.g., if $\mu(t) = t^3$ and $\eta(t,\lambda) = -t\lambda^2$, then the set of solutions of $\mu(t) + \eta(t,\lambda) = 0$ is a union of three graphs: $t = 0$ and $t = \pm \lambda$.
\end{remark}

The resulting connection $A_\lambda:=\tilde A_{t(\lambda),\lambda}$ will be smooth by elliptic regularity.
That $A_\lambda$ converges to $B_0$ on the complement of $P_0$ and that at each point $x\in P_0$ an ASD instanton in the equivalence class of $\fI(x)$ bubbles off transversely is clear, since we constructed $A_{t,\lambda}$ accordingly and $\bar A_\lambda$ is a small perturbation of $A_{t,\lambda}$.
This concludes the proof of \autoref{Thm_A}.
\qed


\printreferences

\end{document}
